\documentclass[final]{amsart}
\usepackage{amssymb}
\usepackage{hhline}
\usepackage{graphicx}

\textwidth  500pt
\textheight 670pt
\oddsidemargin  -20pt
\evensidemargin -20pt
\topmargin  -35pt

\newtheorem{theorem}{Theorem}[section]
\newtheorem{lemma}[theorem]{Lemma}
\newtheorem{proposition}[theorem]{Proposition}

\theoremstyle{definition}

\newcommand{\Aut}{\mathrm{Aut\mkern 2mu}}
\newcommand{\M}{\mathrm{M\mkern 1mu}}

\title{Interassociativity and three-element doppelsemigroups}
\author{Volodymyr Gavrylkiv and Diana Rendziak}
\address[V.~Gavrylkiv]{Vasyl Stefanyk Precarpathian National University, Ivano-Frankivsk, Ukraine} \email{vgavrylkiv@gmail.com}
\address[D.~Rendziak]{Vasyl Stefanyk Precarpathian National University, Ivano-Frankivsk, Ukraine} \email{rendziakdiana007@gmail.com}
\subjclass{08B20, 20M10, 20M50, 17A30}
\keywords{semigroup, interassociativity,  doppelsemigroup, strong doppelsemigroup}

\begin{document}

\begin{abstract}
In the paper  we characterize all interassociates of  some
non-inverse semigroups and describe up to isomorphism all
three-element (strong) doppelsemigroups and their automorphism
groups. We prove that there exist $75$ pairwise non-isomorphic three-element
doppelsemigroups among which $41$ doppelsemigroups are
commutative. Non-commutative doppelsemigroups are divided into
$17$ pairs of dual doppelsemigroups. Also up to isomorphism there
are $65$ strong doppelsemigroups of order $3$, and all non-strong
doppelsemigroups are not commutative. 
\end{abstract}

\maketitle

\section*{Introduction}

Given a semigroup $(S, \dashv)$, consider a semigroup $(S, \vdash)$ defined on the same set. We say that $(S, \vdash)$ is an {\em interassociate} of $(S, \dashv)$ provided
$(x\dashv y)\vdash z=x\dashv(y\vdash z)$ and $(x\vdash y)\dashv z=x\vdash(y\dashv z)$
for all $x, y, z\in S$. In 1971, Zupnik~\cite{Zu} coined the term interassociativity in
a general groupoid setting. However, he required only one of the two defining
equations to hold. The present concept of interassociativity for semigroups
originated in 1986 in Drouzy~\cite{Dr}, where it is noted that every group is isomorphic
to each of its interassociates. In 1983, Gould and Richardson~\cite{GR} introduced
{\em strong interassociativity}, defined by the above equations along with  $x\dashv (y\vdash z)=x\vdash(y\dashv z)$. J. B. Hickey in 1983~\cite{Hi83} and 1986~\cite{Hi86} dealt with the special case of interassociativity in which the operation $\vdash$ is defined by specifying $a\in S$ and stipulating that $x \vdash y = x\dashv a\dashv y$ for all $x, y\in S$. Clearly $(S, \vdash)$, which Hickey calls a {\em variant} of $(S, \dashv)$, is a semigroup that is an interassociate of $(S, \dashv)$. It is  easy to show that if $(S, \dashv)$ is a monoid, every interassociate $(S, \vdash)$ must satisfy the condition $x\vdash y=x\dashv a\dashv y$ for some fixed element $a\in S$ and for all $x,y\in S$, that is $(S, \vdash)$ is a variant of $(S, \dashv)$. Methods of constructing interassociates were developed, for semigroups in general and for specific classes of semigroups, in 1997 by Boyd, Gould and Nelson~\cite{BoGN}. The description of all interassociates of finite
monogenic semigroups was presented by Gould, Linton and Nelson in 2004, see~\cite{GLN}.

A {\em doppelsemigroup} is an algebraic structure $(D,\dashv,\vdash)$ consisting of a non-empty set $D$ equipped with two associative binary operations $\dashv$ and $\vdash$ satisfying the following axioms:
$$(D_1)\ \ (x\dashv y)\vdash z=x\dashv(y\vdash z),$$
$$(D_2)\ \ (x\vdash y)\dashv z=x\vdash(y\dashv z).$$

Thus, we can see that in any doppelsemigroup $(D,\dashv,\vdash)$, $(D,\vdash)$ is
an interassociate of $(D,\dashv)$, and conversely, if a semigroup $(D,\vdash)$ is an
interassociate of a semigroup $(D,\dashv)$ then $(D,\dashv,\vdash)$ is a doppelsemigroup.
A doppelsemigroup $(D,\dashv,\vdash)$ is called {\em commutative}~\cite{Zh2017AU} if both semigroups $(D,\dashv)$ and $(D,\vdash)$ are commutative.
A doppelsemigroup $(D,\dashv,\vdash)$ is said to be {\em strong}~\cite{Zh2018} if it satisfies the axiom
$x\dashv (y\vdash z)=x\vdash(y\dashv z)$.

Many classes of doppelsemigroups were studied by Anatolii~Zhuchok and Yurii~Zhuchok. The free product of doppelsemigroups, the free (strong) doppelsemigroup, the free commutative (strong) doppelsemigroup, the free n-nilpotent (strong) doppelsemigroup and the free rectangular doppelsemigroup were constructed in~\cite{Zh2017AU, Zh2018, ZhZhK2020}. Relatively free doppelsemigroups were studied in~\cite{Zh2018M}.  The free n-dinilpotent (strong) doppelsemigroup was constructed in~\cite{ZhD2016, Zh2018}. In~\cite{Zh2017} A.~Zhuchok described the free left n-dinilpotent doppelsemigroup. Representations of ordered doppelsemigroups by binary relations were studied   by Y.~Zhuchok and  J.~Koppitz, see~\cite{ZhK2019}. 

Until now, the task of describing all pairwise non-isomorphic
(strong) doppelsemigroups of order $3$ has not been solved. The goal of the present work is to characterize all interassociates   of some
non-inverse semigroups, and use these characterizations in
describing up to isomorphism all three-element (strong)
doppelsemigroups and their automorphism groups.

\section{Preliminaries}

A semigroup $S$ is called an {\em inflation} of its subsemigroup $T$ (see~\cite{CP}, Section 3.2)  provided that there is an surjective map $r: S\to T$ such that $r^2=r$ and $r(a)r(b)=ab$ for all $a,b\in S$. In the described situation $S$ is often referred to as an {\em inflation of $T$ with an associated map $r$} (or just {\em with a map $r$)}. It is immediate that if $S$ is an inflation of $T$ then $T$ is a retract of $S$ (that is the image under a {\em retraction} $r$ in the sense that $r(a)=a$ for all $a\in T$) and $S^2\subset T$.

A semigroup $S$ is called {\em monogenic} if it is generated by some element $a\in S$ in the sense that $S=\{a^n\}_{n\in\mathbb N}$. If a monogenic semigroup is
infinite then it is isomorphic to the additive semigroup $\mathbb N$ of positive integer numbers. A
finite monogenic semigroup $S=\langle a\rangle$ also has simple
structure, see  \cite{Howie}. There are positive integer numbers $r$
and $m$ called the {\em index} and the {\em period} of $S$ such
that
\begin{itemize}
    \item $S=\{a,a^2,\dots,a^{r+m-1}\}$ and $r+m-1=|S|$;
    \item $a^{r+m}=a^{r}$;
    \item $C_m:=\{a^r,a^{r+1},\dots,a^{r+m-1}\}$ is
    a cyclic and maximal subgroup of $S$ with the
    neutral element $e=a^n\in C_m$ and generator $a^{n+1}$, where $n\in (m\cdot\mathbb N)\cap\{r,\dots,r+m-1\}$.
\end{itemize}

We denote by $\M_{r,m}$ a finite monogenic semigroup of index $r$ and period $m$.

\smallskip

Recall that an {\em isomorphism} between $(S,*)$ and $(S',\circ)$ is a bijective
function $\psi:S\to S'$ such that $\psi(x*y)=\psi(x)\circ \psi(y)$ for
all $x,y\in S$. If there exists an isomorphism between $(S,*)$ and
$(S',\circ)$ then $(S,*)$ and $(S',\circ)$ are said to be {\em isomorphic}, denoted
$(S,*)\cong (S',\circ)$. An isomorphism between $(S,*)$ and $(S,*)$ is called an {\em
automorphism} of a semigroup $(S,*)$. By $\Aut (S,*)$ we denote the
automorphism group of a semigroup $(S,*)$.

An element $e$ of a semigroup $(S,*)$ is called an {\em idempotent} if $e*e=e$.  The semigroup is a {\em band}, if all its elements are idempotents. Commutative bands are called {\em semilattices}.
By $L_n$ we denote the {\em linear semilattice} $\{0, 1,\ldots, n-1\}$ of order $n$, endowed with the operation of minimum.

If $(S,*) $ is a semigroup then the semigroup $(S,{*}^d)$ with operation $x{*}^d y=y* x$ is
called {\em dual} to $(S,*)$.

\smallskip

A non-empty subset $I$ of a semigroup $(S,*)$ is called an {\em ideal}
if $I*S\cup S*I\subset I$. An element $z$ of a semigroup $S$ is called a {\em zero} (resp. a
{\em  left zero}, a {\em right zero}) in $S$ if $a*z=z*a=z$ (resp. $z*a=z$, $a*z=z$) for any $a\in S$. If $(D,\dashv, \vdash)$ is a doppelsemigroup and $z\in D$ is a { zero} (resp. a
{ left zero}, a { right zero}) of a semigroup $(D,\dashv)$ then $(D_1)$ and $(D_2)$ imply that $z$ is a zero (resp. a { left zero}, a { right zero}) of a semigroup $(D,\vdash)$, and vice versa. Thus, any interassociate of a semigroup with zero is a semigroup with zero as well.

A semigroup $(S,*)$ is called a {\em null semigroup} if there exists
an element $z\in S$ such that $x*y=z$ for any $x,y\in S$. In this
case  $z$ is a zero of $S$. All null semigroups on the same set
are isomorphic. By $O_X$ we denote a null semigroup on a set $X$.
If $X$ is finite of cardinality $|X|=n$ then instead of $O_X$ we
use $O_n$.  It is easy to see that a null semigroup is a strong
interassociate of each semigroup  with the same zero.

\smallskip

Let $X$ be a set, $z\in X$ and $A\subset X\setminus\{z\}$. Define  the binary operation $*$ on  $X$ in the following way:

$$x* y=\begin{cases}
x,\ y=x\in A \\
z,\ \text{otherwise}.
\end{cases}$$

It is easy to check that a set $X$ endowed with the operation $*$ is a semigroup with  zero $z$, and we denote this semigroup by $O^A_X$.
If $A=X\setminus\{z\}$ then $O^A_X$ is a semilattice. In the case $A=\emptyset$,  $O^A_X$ coincides with a null  semigroup  with  zero $z$.
The semigroups $O^A_X$ and $O^B_Y$ are isomorphic if and only if $|X|=|Y|$ and $|A|=|B|$. If $X$ is a finite set of cardinality $|X|=n$ and $|A|=m$ then we use $O_n^m$ instead of $O^A_X$.

\smallskip

Let $(S,*)$ be a semigroup and $e\notin S$. The binary operation $*$ defined on  $S$  can be extended to $S\cup\{e\}$ putting $e*s=s*e=s$ for all $s\in S\cup \{e\}$.
The notation $(S,*)^{+1}$  denotes a monoid $(S\cup\{e\}, *)$ obtained from $(S,*)$ by adjoining the extra identity $e$ (regardless of whether $(S,*)$ is or is not a monoid).

Let $(S,*)$ be a semigroup and $0\notin S$. The binary operation $*$ defined on  $S$  can be extended to $S\cup\{0\}$ putting $0*s=s*0=0$ for all $s\in S\cup \{0\}$.
The notation $(S,*)^{+0}$  denotes a semigroup $(S\cup\{0\},*)$ obtained from $(S,*)$ by adjoining the extra zero $0$ (regardless of whether $(S,*)$ has or has not the zero).

Let $(M, *)$ be a monoid with  identity $e$, and  $\tilde{1}\notin M$.
The binary operation $*$ defined on  $M$  can be extended to $M\cup\{\tilde{1}\}$
putting ${\tilde{1}}*{\tilde{1}}=e$ and $\tilde{1}* m=m* \tilde{1}=m$ for all $m\in M$.
The notation $(M,*)^{\tilde{1}}$  denotes the semigroup  obtained from $(M,*)$ by adjoining an extra element $\tilde{1}$. Note that  $(M,*)^{\tilde{1}}$ is not a monoid and $(M,*)^{\tilde{1}}$ is an inflation of a monoid $(M,*)$.

Let $(D,\dashv, \vdash)$ be a doppelsemigroup and $0\notin D$. The binary operations defined on  $D$  can be extended to $D\cup\{0\}$ putting $0\dashv d=d\dashv 0=0=0\vdash d=d\vdash 0 $ for all $d\in D\cup \{0\}$. The notation $(D,\dashv, \vdash)^{+0}$  denotes a doppelsemigroup $(D\cup\{0\},\dashv, \vdash)$ obtained from $(D,\dashv, \vdash)$ by adjoining the extra zero $0$. If $(D,\dashv, \vdash)$ is a strong doppelsemigroup then $(D,\dashv, \vdash)^{+0}$ is a strong doppelsemigroup as well. It is easy to see that $\Aut((D,\dashv, \vdash)^{+0})\cong\Aut(D,\dashv, \vdash)$.

A semigroup $(S,*)$ is said to be a {\em left (right) zero semigroup}
if $a*b=a$ ($a*b=b$) for any $a,b\in S$. By  $LO_X$ and $RO_X$ we
denote  a left zero semigroup and a right zero semigroup on
a set $X$, respectively.  It is easy to see that the semigroups $LO_X$ and $RO_X$ are dual. If $X$ is finite of cardinality $|X|=n$ then instead of $LO_X$ and $RO_X$ we use $LO_n$ and $RO_n$, respectively.

\smallskip

Let $X$ be a set, $A\subset X$ and $0\notin X$. Define  the binary operation $*$ on  $X^0=X\cup\{0\}$ in the following way:

$$x* y=\begin{cases}
x, \text{ if }y \in A \\
0, \text{ if }y \in X^0\setminus A.
\end{cases}$$

It is easy to check that a set $X^0$ endowed with the operation $*$ is a semigroup with  zero $0$, and we denote this semigroup by $LO^{\sim 0}_{A\leftarrow X}$. If $A=X$ then $LO^{\sim 0}_{A\leftarrow X}$ coincides with $LO^{+0}_X$. In the case $A=\emptyset$,  $LO^{\sim 0}_{A\leftarrow X}$ coincides with a null semigroup $O_{X^0}$ with  zero $0$. The semigroups $LO^{\sim 0}_{A\leftarrow X}$ and $LO^{\sim 0}_{B\leftarrow Y}$ are isomorphic if and only if $|X|=|Y|$ and $|A|=|B|$. If $X$ is a finite set of cardinality $|X|=n$ and $|A|=m$ then we use $LO^{\sim 0}_{m\leftarrow n}$ instead of $LO^{\sim 0}_{A\leftarrow X}$.

\smallskip

Let $X$ be a set, $A\subset X$ and $0\notin X$. Define the binary operation $*$ on  $X^0=X\cup\{0\}$ in the following way:

$$x* y=\begin{cases}
y, \text{ if }x \in A\\
0, \text{ if }x \in X^0\setminus A.
\end{cases}$$

It is easy to check that a set $X^0$ endowed with the operation $*$ is a semigroup with  zero $0$, and we denote this semigroup by $RO^{\sim 0}_{A\leftarrow X}$. If $A=X$ then $RO^{\sim 0}_{A\leftarrow X}$ coincides with $RO^{+0}_X$. In the case $A=\emptyset$, $RO^{\sim 0}_{A\leftarrow X}$ coincides with a null semigroup on ${X^0}$ with  zero $0$. Semigroups $RO^{\sim 0}_{A\leftarrow X}$ and $RO^{\sim 0}_{B\leftarrow Y}$ are isomorphic if and only if $|X|=|Y|$ and $|A|=|B|$. If $X$ is a finite set of cardinality $|X|=n$ and $|A|=m$ then we use $RO^{\sim 0}_{m\leftarrow n}$ instead of $RO^{\sim 0}_{A\leftarrow X}$.

 It is easy to see that the semigroups $LO^{\sim 0}_{A\leftarrow X}$ and $RO^{\sim 0}_{A\leftarrow X}$ are dual.
 
\smallskip 

Let  $a$ and $c$ be different elements of a set $X$. Define the associative binary operation $\dashv_c^a$ on  $X$ in the following way:

$$x\dashv_c^a y=\begin{cases}
x,\text{ if } x\neq c \\
a,\text{ if } x=c\text{ and } y\neq c\\
c,\text{ if } x=y=c.
\end{cases}$$

It follows that $(X,\dashv_c^a)$ is a non-commutative band in
which all elements $z\neq c$ are left zeros.

It is not difficult to check that for any different $b, d\in X$,
the semigroups $(X,\dashv_c^a)$ and $(X,\dashv_d^b)$ are
isomorphic. We denote  by $LOB_X$ a model semigroup of the class
of semigroups isomorphic to $(X,\dashv_c^a)$. If $X$ is a finite
set of cardinality $|X|=n$  then we use $LOB_n$ instead of
$LOB_X$. 

 The semigroup $ROB_X$ is defined dually.

\smallskip

Let  $a$ and $c$ be different elements of a set $X$. Define the associative binary operation $\vdash^a_c$ on  $X$ in the following way:

$$x\vdash^a_c y=\begin{cases}
x,\ x\neq c \\
a,\ x=c.
\end{cases}$$

It follows that $(X,\vdash_c^a)$ is a non-commutative non-regular semigroup in which all elements $z\neq c$ are left zeros.

It is not difficult to check that for any $b\neq c$,
the semigroups $(X,\vdash_c^a)$ and $(X,\vdash_c^b)$ are
isomorphic. We denote  by $LO_{X\setminus\{c\}\leftarrow X}$  a model semigroup of the class of semigroups isomorphic to $(X,\vdash_c^a)$. If $X$ is a finite set of cardinality $|X|=n$  then we use $LO_{(n-1)\leftarrow n}$ instead of $LO_{X\setminus\{c\}\leftarrow X}$. 

Dually we  define the semigroups  $RO_{X\setminus\{c\}\leftarrow X}$ and $RO_{(n-1)\leftarrow n}$.

\smallskip

A transformation $l : S\to S$ of a semigroup $(S,*)$ is called a {\em left translation} if $l(x* y)=l(x)* y$ for all $x,y\in S$. By Corollary 2.2. from~\cite{BoGN} for any semigroup $(S,*)$ and for any its left translation $l$, the semigroup $(S,*_l)$, where $x*_ly=x* l(y)$, is an interassociate of $(S,*)$. Thus,
$(S,*,*_l)$ is a doppelsemigroup for any left translation $l : S\to S$.

The following lemma was proved in~\cite{BoGN}.

\begin{lemma}\label{inflation}
 Let $(S,*)$ be an inflation of an inverse Clifford semigroup $(A,*)$ and let
$r : S \to A$ denote the associated retraction. If $(S,\circ)$ is a semigroup that is an interassociate of $(S,*)$ then $A$ is an ideal of $(S,\circ)$ and $(A,\circ)=(A, *_l)$ for some left translation $l$ of $(A,*)$. Moreover, $r$ is a homomorphism of $(S, \circ)$ onto $(A,\circ)$.
\end{lemma}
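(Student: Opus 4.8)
The plan is to exploit the Clifford structure of $(A,*)$ throughout: write $A$ as a semilattice $Y$ of groups $\{G_\alpha\}_{\alpha\in Y}$, so that each $a\in A$ lies in a unique $G_{\mathrm{ind}(a)}$, carries the central idempotent $e_a=aa^{-1}=a^{-1}a$ with $a=e_a*a=a*e_a$, and $\mathrm{ind}(u*v)=\mathrm{ind}(u)\wedge\mathrm{ind}(v)$. I will also repeatedly use the two consequences of the inflation hypothesis recorded in the excerpt, namely $S^2\subseteq A$ and $u*v=r(u)*r(v)$, so that a $*$-product depends on each factor only through its $r$-image. Throughout, $(D_1)$ denotes $(x*y)\circ z=x*(y\circ z)$ and $(D_2)$ denotes $(x\circ y)*z=x\circ(y*z)$.

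First I would show that $A$ is an ideal of $(S,\circ)$. For $x\in S$ and $a\in A$, writing $a=e_a*a$ and applying $(D_2)$ gives $x\circ a=x\circ(e_a*a)=(x\circ e_a)*a\in S*S\subseteq A$; dually, writing $a=a*e_a$ and applying $(D_1)$ gives $a\circ x=(a*e_a)\circ x=a*(e_a\circ x)\in S*S\subseteq A$. Hence $A\circ S\cup S\circ A\subseteq A$, and in particular $(A,\circ)$ is a subsemigroup.

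Next, to establish $(A,\circ)=(A,*_l)$, I would define $l\colon A\to A$ by $l(b)=e_b\circ b$, which is well defined since $A$ is an ideal. The technical engine is the observation that for any idempotent $e\in A$ and any $b\in A$ the element $e\circ b$ has small index: from $(e*e)\circ b=e*(e\circ b)$ and $(e\circ b)*e_b=e\circ(b*e_b)$ one reads off $e\circ b\in e*A\cap A*e_b$, whence $\mathrm{ind}(e\circ b)\le\mathrm{ind}(e)\wedge\mathrm{ind}(b)$. Using $a\circ b=(a*e_a)\circ b=a*(e_a\circ b)$ together with the centrality of idempotents and this index bound, a short computation collapses $a*(e_a\circ b)$ to $a*(e_b\circ b)=a*l(b)$, giving $a\circ b=a*l(b)$. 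That $l$ is a \emph{left} translation is then forced by $(D_2)$: indeed $l(x)*y=(e_x\circ x)*y=e_x\circ(x*y)$, while $l(x*y)=e_{x*y}\circ(x*y)$, and since $e_{x*y}\le e_x$ and $e_x\circ(x*y)$ already has index $\le\mathrm{ind}(x*y)$, the two agree.

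Finally, for the homomorphism claim I would record the sandwich identity. With $e=e_{r(x)}$ and $f=e_{r(y)}$ the inflation relations give $e*x=r(x)$ and $y*f=r(y)$, so by $(D_1)$ and $(D_2)$
\[
r(x)\circ r(y)=(e*x)\circ(y*f)=e*(x\circ y)*f=e*r(x\circ y)*f.
\]
Everything thus reduces to showing $e*r(x\circ y)*f=r(x\circ y)$, i.e.\ that $\mathrm{ind}(r(x\circ y))\le\mathrm{ind}(r(x))\wedge\mathrm{ind}(r(y))$, and this index bound is the main obstacle. I would obtain it by a separation argument: by $(D_2)$ the expression $(x\circ y)*z=x\circ(y*z)$ depends on $z\in A$ only through $r(y)*z$, so the choices $z=e_{r(x\circ y)}$ and $z'=e_{r(x\circ y)}*f$ (which satisfy $r(y)*z=r(y)*z'$, since $r(y)*e_{r(x\circ y)}$ has index $\le\mathrm{ind}(f)$) force $r(x\circ y)=r(x\circ y)*f$, i.e.\ $\mathrm{ind}(r(x\circ y))\le\mathrm{ind}(r(y))$; the dual choice applied to $(D_1)$ yields $\mathrm{ind}(r(x\circ y))\le\mathrm{ind}(r(x))$. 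Feeding the resulting bound back into the sandwich identity gives $r(x\circ y)=r(x)\circ r(y)$, and since $r$ is surjective onto $A$ by hypothesis, this completes the proof.
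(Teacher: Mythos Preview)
The paper does not supply its own proof of this lemma: it is quoted verbatim from Boyd--Gould--Nelson (the sentence immediately preceding the lemma reads ``The following lemma was proved in~\cite{BoGN}''), so there is no in-paper argument to compare your attempt against.

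That said, your argument is correct. The only step a reader might want spelled out is the ``short computation'' collapsing $a*(e_a\circ b)$ to $a*(e_b\circ b)$. One clean way: by $(D_1)$ and centrality of idempotents,
\[
(e_a*e_b)\circ b \;=\; e_a*(e_b\circ b)\quad\text{and}\quad (e_b*e_a)\circ b \;=\; e_b*(e_a\circ b)\;=\;e_a\circ b,
\]
the last equality using your index bound $\mathrm{ind}(e_a\circ b)\le\mathrm{ind}(b)$. Since $e_a*e_b=e_b*e_a$, this gives $e_a\circ b=e_a*(e_b\circ b)$, whence $a*(e_a\circ b)=a*e_a*(e_b\circ b)=a*(e_b\circ b)=a*l(b)$. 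The remaining pieces---that $A$ is a $\circ$-ideal, that $l$ is a left translation (your reduction of $e_{x*y}\circ(x*y)$ to $e_x\circ(x*y)$ via $e_{x*y}=e_{x*y}*e_x$ and $(D_1)$ is exactly right), and the separation argument bounding $\mathrm{ind}(r(x\circ y))$---all go through as written.
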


\section{Isomorphisms of doppelsemigroups}

A bijective map $\psi : D_1 \to D_2$ is called an {\em isomorphism of doppelsemigroups}  $(D_1,\dashv_1, \vdash_1)$ and $(D_2,\dashv_2, \vdash_2)$ if $$\psi(a\dashv_1 b)=\psi(a)\dashv_2\psi(b)\ \ \text{  and  }\ \ \psi(a\vdash_1 b)=\psi(a)\vdash_2\psi(b)$$ for all $a,b\in D_1$.

If there exists an isomorphism between the doppelsemigroups $(D_1,\dashv_1, \vdash_1)$ and $(D_2, \dashv_2, \vdash_2)$ then $(D_1, \dashv_1, \vdash_1)$ and $(D_2, \dashv_2, \vdash_2)$ are said to be {\em isomorphic}, denoted
$(D_1,\dashv_1, \vdash_1)\cong (D_2,\dashv_2, \vdash_2)$. An isomorphism $\psi: D\to D$ is called an {\em   automorphism} of a doppelsemigroup $(D,\dashv, \vdash)$. By $\Aut(D,\dashv, \vdash)$ we denote the automorphism group of a doppelsemigroup $(D,\dashv, \vdash)$.

\begin{proposition}\label{nulldop}
Let $(D_1,\dashv_1, \vdash_1)$ and $(D_2, \dashv_2, \vdash_2)$ be doppelsemigroups such that   $(D_1, \dashv_1)$ and $(D_2, \dashv_2)$ are null semigroups. If the semigroups $(D_1, \vdash_1)$ and $(D_2, \vdash_2)$ are isomorphic then  the doppelsemigroups $(D_1,\dashv_1, \vdash_1)$ and $(D_2,\dashv_2, \vdash_2)$ are isomorphic as well.
\end{proposition}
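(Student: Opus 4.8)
The plan is to show that the very semigroup isomorphism witnessing $(D_1,\vdash_1)\cong(D_2,\vdash_2)$ is automatically an isomorphism of doppelsemigroups. First I would record the structural fact that since $(D_1,\dashv_1)$ is a null semigroup, its zero $z_1$ is also the zero of the interassociate $(D_1,\vdash_1)$: this is precisely the observation from the Preliminaries that, in a doppelsemigroup, a zero of one operation is forced to be a zero of the other by virtue of $(D_1)$ and $(D_2)$. The same argument on the $D_2$ side yields the common zero $z_2$ of $(D_2,\dashv_2)$ and $(D_2,\vdash_2)$.

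Next, let $\psi:(D_1,\vdash_1)\to(D_2,\vdash_2)$ be the given semigroup isomorphism. Since the zero of a semigroup is characterized purely algebraically (it is the unique element that absorbs every product), it is preserved by any isomorphism, so $\psi(z_1)=z_2$. This is the only point that calls for a short argument, and it is the hinge on which the rest of the proof turns.

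Finally I would verify $\dashv$-compatibility by direct computation. For arbitrary $x,y\in D_1$ we have $x\dashv_1 y=z_1$ because $(D_1,\dashv_1)$ is null, whence $\psi(x\dashv_1 y)=\psi(z_1)=z_2$; on the other hand $\psi(x)\dashv_2\psi(y)=z_2$ because $(D_2,\dashv_2)$ is null. Hence $\psi(x\dashv_1 y)=\psi(x)\dashv_2\psi(y)$, and since $\psi$ already respects $\vdash$ by hypothesis, $\psi$ is an isomorphism of doppelsemigroups. I expect no genuine obstacle here: the null structure of $\dashv$ collapses both sides of the required identity to the common zero, so the entire content reduces to the (routine) fact that $\psi$ carries $z_1$ to $z_2$.
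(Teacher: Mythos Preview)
Your proof is correct and follows essentially the same approach as the paper's own argument: both observe that the zero of each null $\dashv$-semigroup is forced to be the zero of the corresponding $\vdash$-semigroup, that the given $\vdash$-isomorphism $\psi$ therefore sends $z_1$ to $z_2$, and that this alone makes $\psi$ compatible with the null operations. The only cosmetic difference is that the paper summarizes your final computation by saying that any zero-preserving bijection between null semigroups is an isomorphism, whereas you write out the one-line verification explicitly.
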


\begin{proof}
Let $z_1$ and $z_2$ be  zeros of null semigroups $(D_1, \dashv_1)$ and $(D_2, \dashv_2)$, respectively. Then $z_1$ and $z_2$ are  zeros of  the semigroups $(D_1, \vdash_1)$ and $(D_2, \vdash_2)$, respectively. Let $\psi : D_1 \to D_2$ is an isomorphism of the semigroups $(D_1, \vdash_1)$ and $(D_2, \vdash_2)$. Since zeros are preserved by isomorphisms of semigroups, $\psi(z_1)=z_2$. Taking into account that $|D_1|=|D_2|$ and any map between two null semigroups of the same order that preserves  zeros is a isomorphism of these semigroups, we conclude that $\psi : D_1 \to D_2$ is an isomorphism of the doppelsemigroups $(D_1,\dashv_1, \vdash_1)$ and $(D_2,\dashv_2, \vdash_2)$.
\end{proof}

\begin{proposition}\label{uniquedop}
Let $(D_1,\dashv_1, \vdash_1)$ and $(D_2, \dashv_2, \vdash_2)$ be doppelsemigroups, and $(D_1, \vdash)\cong (D_1, \vdash_1)$ implies $\vdash\ =\ \vdash_1$ for any interassociate $(D_1, \vdash)$ of $(D_1, \dashv_1)$. If $(D_2, \dashv_2)\cong (D_1, \dashv_1)$ and $(D_2, \vdash_2)\cong (D_1, \vdash_1)$ then 
$(D_2, \dashv_2, \vdash_2)\cong$ $(D_1,\dashv_1, \vdash_1)$.
\end{proposition}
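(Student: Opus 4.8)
The plan is to fix an isomorphism of the first pair of semigroups, transport the second operation of $D_2$ across it to obtain a new operation on $D_1$, and then invoke the uniqueness hypothesis to identify that transported operation with $\vdash_1$.

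First I would choose an isomorphism $\phi:(D_2,\dashv_2)\to(D_1,\dashv_1)$, which exists by assumption. Using $\phi$ I define a binary operation $\vdash$ on $D_1$ by
$$a\vdash b=\phi\bigl(\phi^{-1}(a)\vdash_2\phi^{-1}(b)\bigr),\qquad a,b\in D_1.$$
By this very construction $\phi$ becomes simultaneously an isomorphism $(D_2,\dashv_2)\to(D_1,\dashv_1)$ and $(D_2,\vdash_2)\to(D_1,\vdash)$. In particular $(D_1,\vdash)$ is a semigroup isomorphic to $(D_2,\vdash_2)$, and since $(D_2,\vdash_2)\cong(D_1,\vdash_1)$ by hypothesis, we get $(D_1,\vdash)\cong(D_1,\vdash_1)$.

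The second step is to check that $(D_1,\vdash)$ is an interassociate of $(D_1,\dashv_1)$. Because $\phi$ intertwines both operations at once and $(D_2,\dashv_2,\vdash_2)$ satisfies the doppelsemigroup axioms $(D_1)$ and $(D_2)$, applying $\phi$ to each of these identities transports them verbatim into the same identities for $\dashv_1$ and $\vdash$ on $D_1$. Hence $(D_1,\dashv_1,\vdash)$ is a doppelsemigroup, i.e. $(D_1,\vdash)$ is an interassociate of $(D_1,\dashv_1)$.

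Finally, since $(D_1,\vdash)$ is an interassociate of $(D_1,\dashv_1)$ with $(D_1,\vdash)\cong(D_1,\vdash_1)$, the uniqueness hypothesis forces $\vdash\ =\ \vdash_1$ as operations. Consequently $\phi$ respects $\dashv$ (by choice) and respects $\vdash_2$ against $\vdash_1=\vdash$ (by construction), so $\phi$ is an isomorphism of the doppelsemigroups $(D_2,\dashv_2,\vdash_2)$ and $(D_1,\dashv_1,\vdash_1)$. The step I expect to carry the real weight is the passage from $\cong$ to genuine equality of operations, which is precisely where the uniqueness assumption enters; by contrast, the verification that transporting one operation by $\phi$ preserves interassociativity is routine, since $\phi$ intertwines both operations simultaneously.
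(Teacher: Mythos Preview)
Your proof is correct and follows essentially the same approach as the paper: both transport $\vdash_2$ across a chosen isomorphism $\phi:(D_2,\dashv_2)\to(D_1,\dashv_1)$ to obtain an interassociate of $(D_1,\dashv_1)$ isomorphic to $(D_1,\vdash_1)$, then invoke the uniqueness hypothesis to force equality with $\vdash_1$. Your version is slightly more explicit in verifying that the doppelsemigroup axioms transfer under $\phi$, but the argument is otherwise identical.
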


\begin{proof} Let $\psi : D_2 \to D_1$ be an isomorphism of  semigroups $(D_2, \dashv_2)$ and $(D_1, \dashv_1)$. For any $a,b\in D_1$ define the operation $\vdash_{\psi}$ on $D_1$ in the following way:
$$a\vdash_{\psi}b=\psi(\psi^{-1}(a)\vdash_2 \psi^{-1}(b)).$$
It follows that $\psi : D_2 \to D_1$ is an isomorphism from $(D_2, \dashv_2, \vdash_2)$ to $(D_1, \dashv_1, \vdash_{\psi})$, and thus $(D_1, \dashv_1, \vdash_{\psi})$ is a doppelsemigroup as an isomorphic image of the doppelsemigroup $(D_2, \dashv_2, \vdash_2)$. Taking into account that $(D_1, \vdash_{\psi})$  is an interassociate of $(D_1, \dashv_1)$ and $(D_1, \vdash_{\psi})\cong (D_2, \vdash_2)\cong (D_1, \vdash_1)$, we conclude that $\vdash_{\psi}\ =\ \vdash_1$. Therefore, $(D_2, \dashv_2, \vdash_2)\cong (D_1,\dashv_1, \vdash_1)$.
\end{proof}

\begin{proposition}\label{autnulldop}
If $(D,\dashv, \vdash)$  is a doppelsemigroup  such that  $(D,\dashv)$ is a null semigroup then $\Aut(D,\dashv, \vdash)=\Aut(D, \vdash)$.

\end{proposition}

\begin{proof} Let $z$  be a zero of a null semigroup $(D,\dashv)$. Then
$z$ is a zero of  $(D,\vdash)$. If $\psi : D \to D$ is an
automorphism of  $(D, \vdash)$ then $\psi(z)=z$. Using the
similar arguments as in the proof of Proposition~\ref{nulldop}, we
conclude that $\psi : D \to D$ is an automorphism of  $(D,
\dashv)$. It follows that $\psi\in\Aut(D,\dashv, \vdash)$.
Therefore, $\Aut(D,\dashv, \vdash)=\Aut(D, \vdash)$.
\end{proof}

Using the fact that all bijections of a left (right) zero semigroup are its automorphisms and the similar arguments as in the proof of Proposition~\ref{autnulldop}, one can prove the following proposition.

\begin{proposition}\label{autLZdop}
If $(D,\dashv, \vdash)$  is a doppelsemigroup  such that the semigroup $(D,\dashv)$ is isomorphic to $LO_X^{+0}$ or $RO_X^{+0}$ then $\Aut(D,\dashv, \vdash)=\Aut(D, \vdash)$.
\end{proposition}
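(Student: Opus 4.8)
The plan is to mimic the structure of the proof of Proposition~\ref{autnulldop}, replacing the null-semigroup facts with the two facts the authors flag before the statement: that every bijection of a left or right zero semigroup is an automorphism of that semigroup, and that a doppelsemigroup automorphism must preserve \emph{both} operations. First I would fix an identity-free description: suppose $(D,\dashv)\cong LO_X^{+0}$ (the case $RO_X^{+0}$ is dual and handled by the same argument), and let $0$ denote the adjoined zero, so that $(D,\dashv)$ consists of a left zero semigroup on $X$ together with an absorbing element $0$. Since $0$ is a zero of $(D,\dashv)$, the remarks in the Preliminaries on doppelsemigroups with zero guarantee that $0$ is also a zero of $(D,\vdash)$.

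The containment $\Aut(D,\dashv,\vdash)\subseteq\Aut(D,\vdash)$ is immediate, since any automorphism of the doppelsemigroup preserves $\vdash$ by definition. The content is the reverse inclusion, so the main step is: take $\psi\in\Aut(D,\vdash)$ and show $\psi$ also preserves $\dashv$. Because $\psi$ preserves $\vdash$, it fixes the zero of $(D,\vdash)$, hence $\psi(0)=0$, and consequently $\psi$ restricts to a bijection of $X=D\setminus\{0\}$. The structure of $LO_X^{+0}$ is determined entirely by which element is the adjoined zero: for $a,b\in X$ one has $a\dashv b=a$, while any product involving $0$ equals $0$. Thus $\dashv$ depends only on the partition of $D$ into $\{0\}$ and $X$, a partition that $\psi$ respects. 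I would then check $\psi(a\dashv b)=\psi(a)\dashv\psi(b)$ by cases on whether $a,b$ equal $0$: when both lie in $X$ both sides equal $\psi(a)$, and when either is $0$ both sides equal $0$. Hence $\psi\in\Aut(D,\dashv)$, and therefore $\psi\in\Aut(D,\dashv,\vdash)$.

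The one place that needs a little care, and which I expect to be the subtlest point, is verifying that $\psi$ maps $X$ onto $X$ rather than merely fixing $0$ set-theoretically; but this follows at once from $\psi(0)=0$ together with bijectivity, so no genuine obstacle arises. The argument is essentially the observation that the left-zero multiplication on $D$ is recovered from the bare data ``which point is the zero,'' and any $\vdash$-automorphism already carries that data, exactly paralleling how the null-semigroup case in Proposition~\ref{autnulldop} recovers $\dashv$ from the location of its zero. I would close by noting that the $RO_X^{+0}$ case is identical after replacing left zeros by right zeros, which is why the statement can treat the two uniformly, and conclude $\Aut(D,\dashv,\vdash)=\Aut(D,\vdash)$.
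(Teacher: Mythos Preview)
Your proof is correct and follows essentially the same approach the paper indicates: the paper does not give a full proof but simply instructs the reader to use the fact that all bijections of a left (right) zero semigroup are automorphisms together with the argument of Proposition~\ref{autnulldop}, which is precisely what you have carried out in detail. Your case analysis and the handling of the zero element match the intended argument exactly.
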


\section{Interassociates of some non-inverse semigroups}

In this section we characterize all interassociates of some non-inverse semigroups which we shall use  in section~\ref{3eldsg} for describing all three-element (strong) (commutative) doppelsemigroups up to isomorphism.

In the following Propositions~\ref{intnull+0} and~\ref{intnullA} we use Lemma~\ref{inflation} to recognize all (strong) interassociates of the semigroups $O_X^{+0}$ and $O_X^A$.

Given a semigroup $(S, \cdot)$, let $Int(S, \cdot)$ denote the set of all semigroups that
are interassociates of $(S, \cdot)$.


\begin{proposition}\label{intnull+0}
Let $O_X^{+0}$ be a semigroup obtained from a null semigroup  $O_X=(X, \dashv)$ with   zero $z$ by adjoining an extra zero $0\notin X$. The set $Int(O_X^{+0})$ consists of a null semigroup $O_{X\cup\{0\}}$ with zero $0$ and  semigroups $(X, \vdash)^{+0}$ for all semigroups $(X, \vdash)$ with  zero $z$.
All interassociates of $O_X^{+0}$ are strong.
\end{proposition}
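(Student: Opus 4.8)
The plan is to realize $O_X^{+0}$ as an inflation of an inverse Clifford semigroup and then invoke Lemma~\ref{inflation}. Writing $S=X\cup\{0\}$, the two-element subset $A=\{z,0\}$ is a subsemigroup of $(S,\dashv)$ isomorphic to the linear semilattice $L_2$ (with $0$ as zero), hence an inverse Clifford semigroup; and the map $r:S\to A$ sending every point of $X$ to $z$ and fixing $0$ is a retraction witnessing that $(S,\dashv)$ is an inflation of $A$. Thus, for any interassociate $(S,\vdash)$ of $(S,\dashv)$, Lemma~\ref{inflation} gives that $A$ is an ideal of $(S,\vdash)$, that $(A,\vdash)=(A,\dashv_l)$ for a left translation $l$ of $(A,\dashv)$, and that $r$ is a homomorphism of $(S,\vdash)$ onto $(A,\vdash)$.

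For the forward inclusion I would first note that, since $0$ is the zero of $(S,\dashv)$ and zeros are preserved by interassociativity (as recorded in the Preliminaries), $0$ is also a zero of $(S,\vdash)$; in particular $0\vdash s=s\vdash 0=0$ for every $s$. Next, because $A$ is an ideal we have $z\vdash z\in A$, so $z\vdash z\in\{z,0\}$ (equivalently, the two left translations of the two-element semilattice $A$ produce exactly these two values), and I would split into two cases. If $z\vdash z=0$, then for $x,y\in X$ the homomorphism property gives $r(x\vdash y)=z\vdash z=0$, whence $x\vdash y=0$; combined with $0$ being a zero this forces $u\vdash v=0$ for all $u,v\in S$, that is $(S,\vdash)=O_{X\cup\{0\}}$. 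If instead $z\vdash z=z$, the same homomorphism property gives $r(x\vdash y)=z$, so $x\vdash y\in X$ and $X$ is closed under $\vdash$.

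It remains, in the second case, to show that $z$ is the zero of the subsemigroup $(X,\vdash)$, and here I would feed elements of $X$ into the doppelsemigroup axioms. Taking $u,v,w\in X$ in $(D_2)$, the right side is $u\vdash(v\dashv w)=u\vdash z$ while the left side is $(u\vdash v)\dashv w=z$ (both factors lying in $X$ because $X$ is $\vdash$-closed), giving $u\vdash z=z$; symmetrically, $(D_1)$ with $u,v,w\in X$ yields $z\vdash w=z$. Together with $z\vdash 0=0\vdash z=0$, this says $z$ is a zero of $(X,\vdash)$, so $(S,\vdash)=(X,\vdash)^{+0}$ with $(X,\vdash)$ a semigroup having zero $z$, as claimed.

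For the converse and the strongness assertion I would verify directly that each listed semigroup is a strong interassociate. The null semigroup $O_{X\cup\{0\}}$ has the same zero $0$ as $O_X^{+0}$, so it is a strong interassociate by the remark in the Preliminaries. For $(X,\vdash)^{+0}$ with $(X,\vdash)$ any semigroup with zero $z$, I would check $(D_1)$, $(D_2)$ and the strong axiom by cases: when $u,v,w\in X$ every relevant expression collapses to $z$ (using that $z$ is the zero both of the null semigroup $(X,\dashv)$ and of $(X,\vdash)$), and when at least one argument equals $0$ every expression collapses to $0$. I expect the only genuinely delicate points to be the correct identification of the inflation data so that Lemma~\ref{inflation} applies, and the extraction of the zero condition on $z$ in the second case; once $X$ is known to be $\vdash$-closed with $z$ its zero, both the characterization and the automatic strongness follow by routine verification.
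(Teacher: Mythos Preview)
Your proposal is correct and follows essentially the same approach as the paper: set up $O_X^{+0}$ as an inflation of the two-element semilattice $\{z,0\}$, apply Lemma~\ref{inflation}, and split on the value of $z\vdash z$. The only cosmetic differences are that the paper phrases the case split via the two left translations $l_1,l_2$ of $\{z,0\}$ rather than directly via $z\vdash z\in\{z,0\}$, and in the second case the paper deduces that $z$ is a zero of $(X,\vdash)$ by observing that $(X,\vdash)$ is an interassociate of the null semigroup $(X,\dashv)$ (so the preserved-zero remark applies again), whereas you extract it by hand from $(D_1)$ and $(D_2)$; for the converse and strongness the paper invokes the two bullet facts about adjoining a zero and about null semigroups instead of your direct case check.
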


\begin{proof} The semigroup $O_X^{+0}$ is an inflation of its subsemilattice $A=\{0,z\}$
with the associated retraction $r : O_X^{+0} \to A$,
$$r(x)=\begin{cases}
0, \ x=0 \\
z, \ x\in X.
\end{cases}$$

Let $l : A \to A$ be a left translation of the semilattice $(A,
\dashv)$. Then $l(0)=l(0\dashv 0)=l(0)\dashv 0=0$. So, there are
two left translations of $A$: $l_1(x)=0$ and $l_2(x)=x$ for all
$x\in A$. Let $(X^0, \vdash)$ be any interassociate of $O_X^{+0}$,
where $X^0=X\cup\{0\}$. By Lemma~\ref{inflation}, $A$ is an ideal
of $(X^0, \vdash)$, $r$ is a homomorphism from $(X^0, \vdash)$
onto $(A, \vdash)$, and the semigroup $(A, \vdash)$ is equal to
$(A, \dashv_{l_1})$, where $x\vdash y = x\dashv_{l_1} y= x\dashv
l_1(y)=0$ for all $x,y\in A$, or  $(A, \vdash)$ is equal to $(A,
\dashv_{l_2})$, where $x\vdash y = x\dashv_{l_2} y= x\dashv
l_2(y)= x\dashv y$ for all $x,y\in A$. It follows that $(A,
\vdash)$ is a null semigroup with  zero $0$ or $(A, \vdash)=(A,
\dashv)$.

If $(A, \vdash)$ is a null semigroup then $r(x\vdash
y)=r(x)\vdash r(y)=0$. Therefore, the definition of $r$ implies
$x\vdash y=0$ for all $x,y\in X^0$. Consequently, in this case
$(X^0, \vdash)$ is a null semigroup with  zero $0$.

Let $(A, \vdash)=(A, \dashv)$. Taking into account that $r$ is a homomorphic retraction and $A\ni 0$ is an ideal, we conclude that $0\vdash x=r(0\vdash x)=r(0)\vdash r(x)=0\dashv r(x) =0$ and $x\vdash 0=r(x\vdash 0)=r(x)\vdash r(0)=r(x)\dashv 0 =0$ for all $x \in X^0$. If $x, y\in X$ then $r(x\vdash y)=r(x)\vdash r(y)=z\vdash z =z\dashv z= z$.
Thus, the definition of $r$ implies $x\vdash y\in X$ for all $x,y\in X$. Consequently, $(X, \vdash)$ is an interassociate of a null semigroup $(X, \dashv)$ with  zero $z$. It follows that $(X, \vdash)$ is an arbitrary semigroup with  zero $z$, and $(X^0, \vdash)=(X, \vdash)^{+0}$.

To show that all interassociates of $O_X^{+0}$ are strong, it is sufficient to use the following two facts:
\begin{itemize}
\item if $(X, \vdash)$ is a strong interassociate of $(X, \dashv)$ then $(X, \vdash)^{+0}$ is a strong interassociate of $(X, \dashv)^{+0}$;

\item all interassociates of a null semigroup  are strong.
\end{itemize}
\end{proof}

\begin{proposition}\label{intnullA}

A semigroup $(X, \vdash)$ with zero $z$ is an interassociate of $O_X^A$ together with the operation $\dashv$ if and only if the following conditions hold:
\begin{itemize}
    \item[1)] $(A^0, \vdash)$ coincides with $O^B_{A^0}$ for some $B\subset A$, where $A^0=A\cup\{z\}$;
    \item[2)] $A\vdash(X\setminus A)=(X\setminus A)\vdash A=\{z\}$;
    \item[3)] $X\setminus A$  is a subsemigroup with  zero $z$ of $(X, \vdash)$.
\end{itemize}
All interassociates of $O_X^A$ are strong.
\end{proposition}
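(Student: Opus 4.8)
The plan is to follow the template of the proof of Proposition~\ref{intnull+0}, the key observation being that $O_X^A=(X,\dashv)$ is an inflation of a semilattice. First I would check that $O_X^A$ is an inflation of its subsemilattice $A^0=A\cup\{z\}$ with associated retraction $r:X\to A^0$ given by $r(x)=x$ for $x\in A^0$ and $r(x)=z$ for $x\in X\setminus A^0$; verifying $r(x)\dashv r(y)=x\dashv y$ is a short case check. Since every semilattice is an inverse Clifford semigroup, Lemma~\ref{inflation} applies to an arbitrary interassociate $(X,\vdash)$ of $(X,\dashv)$ and yields three facts: $A^0$ is an ideal of $(X,\vdash)$, the retraction $r$ is a homomorphism of $(X,\vdash)$ onto $(A^0,\vdash)$, and $(A^0,\vdash)=(A^0,\dashv_l)$ for some left translation $l$ of $(A^0,\dashv)$, where $x\dashv_l y=x\dashv l(y)$.

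Next I would classify the left translations of the semilattice $(A^0,\dashv)$. Putting $y=x$ in $l(x\dashv y)=l(x)\dashv y$ forces $l(x)=l(x)\dashv x$, i.e. $l(x)\leq x$ in the semilattice order, so $l(z)=z$ and $l(a)\in\{a,z\}$ for each $a\in A$; conversely every such map is a left translation. Setting $B=\{a\in A:l(a)=a\}$, a direct computation shows that $x\dashv_l y$ equals $x$ precisely when $x=y\in B$ and equals $z$ otherwise, that is $(A^0,\vdash)=O^B_{A^0}$, which is condition~1). For conditions~2) and~3) I would use that $r$ restricts to the identity on $A^0$ and that $A^0$ is an ideal: for $a\in A$ and $w\in X\setminus A$ one has $a\vdash w\in A^0$ and $r(a\vdash w)=a\vdash z=z$ (computed in $O^B_{A^0}$), hence $a\vdash w=z$, and symmetrically $w\vdash a=z$, giving condition~2); while for $w,w'\in X\setminus A$ the equality $r(w\vdash w')=z\vdash z=z$ places $w\vdash w'$ in $r^{-1}(z)=X\setminus A$, and $z\vdash w=w\vdash z=z$ as before, so $X\setminus A$ is a subsemigroup with zero $z$, which is condition~3).

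For the converse I would assume conditions~1)--3) and verify the doppelsemigroup axioms $(D_1)$, $(D_2)$, and the strong axiom $x\dashv(y\vdash w)=x\vdash(y\dashv w)$ by a case analysis on whether the elements lie in $A$, equal $z$, or lie in $X\setminus A$. The single fact that drives every case is that a $\vdash$-product can re-enter $A$ only through the diagonal rule of $O^B_{A^0}$, namely $u\vdash v\in A$ if and only if $u=v\in B$ (and then $u\vdash v=u$); in all remaining situations the product is $z$ or stays in $X\setminus A$. Combined with $z$ being a zero for both operations and with $\dashv$ collapsing every off-diagonal product to $z$, each instance of the three identities reduces both sides to the same value, and strongness follows from the same bookkeeping. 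I expect the main obstacle to be precisely this bookkeeping: the classification of the left translations together with the vigilance needed to guarantee that no product unexpectedly lands back in $A$, even though conceptually the argument is a direct adaptation of Proposition~\ref{intnull+0}.
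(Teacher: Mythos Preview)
Your proposal is correct and follows essentially the same approach as the paper: both recognize $O_X^A$ as an inflation of the semilattice $A^0$, invoke Lemma~\ref{inflation}, classify the left translations of $A^0$ via $l(a)\in\{a,z\}$ to obtain $(A^0,\vdash)=O^B_{A^0}$, and then read off conditions~2) and~3) from the ideal property and the homomorphism $r$. For the converse and strongness the paper compresses your case analysis into the single observation that any of the four bracketed triple products is nonzero if and only if all three arguments coincide with some $b\in B$, in which case each product equals $b$; this is exactly the ``diagonal rule'' you isolate, so the bookkeeping you anticipate collapses to one line.
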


\begin{proof}
Note that $O_X^A$ is an inflation of its subsemilattice $A^0$
with the associated retraction $r : O_X^A \to A^0$,
$$r(x)=\begin{cases}
x,\ x\in A \\
z,\ x\notin A.
\end{cases}$$

Let $l : A^0 \to A^0$ be a left translation of  $(A^0, \dashv)$. Then $l(z)=l(z\dashv z)=l(z)\dashv z=z$.
If $a\in A$, $l(a)=b\in A^0$ and $b\neq a$ then
$a\dashv b= z$. Therefore,  $b=b\dashv b = l(a)\dashv b=l(a\dashv b)=l(z)=z$.  It follows that $l(a)\in\{z, a\}$ for any $a\in A$.
On the other hand, it is clear that for any $B\subset A$ the map $l_B : A^0 \to A^0$,
$$l_B(x)=\begin{cases}
x,\ x\in B \\
z,\ x\in A^0\setminus B
\end{cases}$$
is a left translation of $A^0$.

Let $(X, \vdash)$ be any interassociate of $O_X^A$. By Lemma~\ref{inflation}, $A^0$ is an ideal of $(X, \vdash)$, $r$ is a homomorphism from $(X, \vdash)$ onto $(A^0, \vdash)$, and  $(A^0, \vdash)$ is equal to $(A^0, \dashv_{l_B})$, where
$$x\vdash y = x\dashv_{l_B} y= x\dashv l_B(y)=\begin{cases}
x,\ x=y\in B \\
z,\ \text{otherwise}
\end{cases}$$
for all $x,y\in A^0$. This implies $(A^0, \vdash)$ coincides with $O^B_{A^0}$ for $B\subset A$.

Since $A^0$ is an ideal of  $(X, \vdash)$, $a\vdash x,\  x\vdash
a\in A^0$ for all $a\in A^0, x\in X\setminus A$. Taking into
account that $r(a\vdash x)=r(a)\vdash r(x)=r(a)\vdash z=z$ and
$r(x\vdash a)=r(x)\vdash r(a)=z\vdash r(a)=z$ for all $a\in A^0,
x\in X\setminus A$, we conclude that $x\vdash a, a\vdash x\in
(X\setminus A)\cap A^0=\{z\}$. Therefore, $A\vdash(X\setminus
A)=(X\setminus A)\vdash A=\{z\}$.

Let us show that $X\setminus A$ is a subsemigroup of  $(X, \vdash)$. Indeed, since $r$ is a homomorphism, $r((X\setminus A)\vdash(X\setminus A))=r(X\setminus A)\vdash r(X\setminus A)=\{z\}\vdash\{z\}=\{z\}$, and the definition of $r$ implies
$(X\setminus A)\vdash(X\setminus A)\subset X\setminus A$.

Since $(X\setminus A, \dashv)$ is a null semigroup with  zero $z$, $(X\setminus A, \vdash)$ is any semigroup with the same zero $z$.

\smallskip

To show that a semigroup $(X, \vdash)$ for which the conditions 1)-3) hold is a strong interassociate of $O_X^A$, it is sufficient to note the following two facts:
\begin{itemize}

\item an element $s\in\{x\dashv(y\vdash z)$, $(x\dashv y)\vdash z$, $x\vdash(y\dashv z)$,
$(x\vdash y)\dashv z\}$ is non-zero if and only if $x=y=z\in  B$ for some $B\subset A$;

\item $b\dashv(b\vdash b)=(b\dashv b)\vdash b=b\vdash(b\dashv b)=(b\vdash b)\dashv b=b$ for any $b\in B$
for some $B\subset A$.
\end{itemize}

\end{proof}

In the following Proposition~\ref{intmonoid+almost1} we recognize all interassociates of the semigroup $(M,\dashv)^{\tilde{1}}$ for any monoid $(M,\dashv)$.

\begin{proposition}\label{intmonoid+almost1} Let $(M,\dashv)$ be a monoid with  identity $e$, and $M^{\tilde{1}}=M\cup\{\tilde{1}\}$, where $\tilde{1}\notin M$. If $(M^{\tilde{1}}, \vdash)$ is an interassociate of  $(M,\dashv)^{\tilde{1}}$ then $(M^{\tilde{1}}, \vdash)=(M,\dashv)^{+1}$ or $(M^{\tilde{1}}, \vdash)$ is a variant of $(M,\dashv)^{\tilde{1}}$ with the sandwich operation $x\vdash y=x\dashv a\dashv y$, where $a=\tilde{1}\vdash \tilde{1}\in M$. If $(M,\dashv)$ is a commutative monoid then all interassociates of $(M,\dashv)^{\tilde{1}}$ are strong interassociate with each other.
\end{proposition}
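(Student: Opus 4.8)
The plan is to analyze the structure of an arbitrary interassociate $(M^{\tilde 1},\vdash)$ of $(M,\dashv)^{\tilde 1}$ by exploiting the special roles of the identity $e$ and the adjoined element $\tilde 1$. Recall that in $(M,\dashv)^{\tilde 1}$, the element $e$ is the identity of the submonoid $M$ and $\tilde 1\dashv\tilde 1=e$, while $\tilde 1\dashv m=m\dashv\tilde 1=m$ for all $m\in M$. The first step is to pin down the values $\tilde 1\vdash\tilde 1$, $e\vdash e$, and more generally $x\vdash y$ in terms of the $\dashv$-operation, using only the two interassociativity axioms $(D_1)$ and $(D_2)$.

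First I would show that $\tilde 1\vdash\tilde 1$ lands in $M$. Indeed, since $\tilde 1\dashv\tilde 1=e\in M$ and $M$ is closed under $\dashv$, a short computation using $(D_1)$ or $(D_2)$ with suitable substitutions of $\tilde 1$ and $e$ forces $a:=\tilde 1\vdash\tilde 1\in M$. Next I would compute $x\vdash y$ for arbitrary $x,y\in M^{\tilde 1}$ by inserting $\tilde 1$ as a $\dashv$-identity: since $x=x\dashv\tilde 1$ and $y=\tilde 1\dashv y$ whenever $x,y\in M$ (and one can always route through $\tilde 1$), applying $(D_1)$ and $(D_2)$ repeatedly should collapse $x\vdash y$ to a sandwich expression. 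Concretely, I expect
\[
x\vdash y=(x\dashv\tilde 1)\vdash(\tilde 1\dashv y)=x\dashv(\tilde 1\vdash\tilde 1)\dashv y=x\dashv a\dashv y
\]
for all $x,y$ in the relevant range, where the middle equalities use $(D_1)$ and $(D_2)$ to move $\dashv$ past $\vdash$. The key dichotomy then comes from the value of $a$: if $a=e$ the sandwich operation degenerates to $x\vdash y=x\dashv y$, and checking the behaviour of $\tilde 1$ under $\vdash$ should reveal that $\tilde 1$ becomes a two-sided identity, giving $(M^{\tilde 1},\vdash)=(M,\dashv)^{+1}$; if $a\neq e$, then $(M^{\tilde 1},\vdash)$ is the variant of $(M,\dashv)^{\tilde 1}$ with sandwich element $a$, as claimed.

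The main obstacle I anticipate is handling the element $\tilde 1$ carefully in the sandwich formula, since $\tilde 1\notin M$ and the expression $\tilde 1\dashv a\dashv\tilde 1$ must be reconciled with the separately determined value of $\tilde 1\vdash\tilde 1=a$. I would need to verify consistency: in $(M,\dashv)^{\tilde 1}$ we have $\tilde 1\dashv a\dashv\tilde 1=a$ directly from the definition of $\dashv$ (as $\tilde 1$ acts as a $\dashv$-identity), so the sandwich formula $x\vdash y=x\dashv a\dashv y$ is indeed globally consistent, including at $x=y=\tilde 1$. This requires a case check across the four combinations of $x,y\in\{\tilde 1\}$ versus $x,y\in M$, which is routine but must be done to confirm that the two proposed forms exhaust all interassociates.

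Finally, for the commutativity assertion, I would assume $(M,\dashv)$ is commutative and take two interassociates given by sandwich elements $a$ and $a'$ (or the $^{+1}$-case, which corresponds to $a=e$). The strong-interassociativity condition $x\vdash(y\dashv z)=x\dashv(y\vdash z)$ between them amounts, after substituting the sandwich forms, to an identity of the shape $x\dashv a\dashv y\dashv z=x\dashv y\dashv a'\dashv z$; commutativity of $\dashv$ lets the central factors $a$ and $a'$ be freely permuted with the $y$-terms, so the two sides coincide. Thus all such interassociates are mutually strong, completing the proof.
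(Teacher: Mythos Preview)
Your overall strategy---derive a sandwich formula via the interassociativity axioms and then split into cases according to the value of $a=\tilde 1\vdash\tilde 1$---matches the paper's, but the dichotomy you set up is misplaced and this is a genuine gap. You assert first that $a=\tilde 1\vdash\tilde 1$ must land in $M$, and then split on $a=e$ versus $a\neq e$. Both steps are wrong. The value $a=\tilde 1$ is \emph{not} excluded: indeed, precisely in the case $(M^{\tilde 1},\vdash)=(M,\dashv)^{+1}$ one has $\tilde 1\vdash\tilde 1=\tilde 1\notin M$. Conversely, if $a=e\in M$ then the sandwich operation gives $\tilde 1\vdash\tilde 1=\tilde 1\dashv e\dashv\tilde 1=e\neq\tilde 1$, so $\tilde 1$ is \emph{not} an identity for $\vdash$ and you recover $(M,\dashv)^{\tilde 1}$ itself, not $(M,\dashv)^{+1}$. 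The correct case split, as in the paper, is $a=\tilde 1$ (yielding $(M,\dashv)^{+1}$) versus $a\in M$ (yielding the variant with sandwich element $a$); in the second case one must separately argue that $\tilde 1\vdash x$ and $x\vdash\tilde 1$ lie in $M$ and agree with $e\vdash x$, $x\vdash e$, since the identity $x=x\dashv\tilde 1$ that drives your sandwich computation holds only for $x\in M$.

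The same mis-identification recurs in your treatment of the commutative part: you parenthetically say the $^{+1}$-case ``corresponds to $a=e$'', which is false. The paper handles this by observing that $Int((M,\dashv)^{\tilde 1})=Int((M,\dashv)^{+1})$, so every interassociate is a variant of the \emph{monoid} $(M,\dashv)^{+1}$ (with sandwich element possibly $\tilde 1$), and then checks $x\vdash_1(y\vdash_2 z)=x\dashv a_1\dashv y\dashv a_2\dashv z=x\dashv a_2\dashv y\dashv a_1\dashv z=x\vdash_2(y\vdash_1 z)$ by commutativity. Your displayed equation $x\dashv a\dashv y\dashv z=x\dashv y\dashv a'\dashv z$ is not the right identity to verify; the correct one has both sandwich elements present on each side.
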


\begin{proof}
Let $(M^{\tilde{1}}, \vdash)$ be an interassociate of the semigroup $(M,\dashv)^{\tilde{1}}$. Then for any $x,y\in M$ we have the following equalities: $$x\vdash y=(x\dashv \tilde{1})\vdash(\tilde{1}\dashv y)=x\dashv(\tilde{1}\vdash \tilde{1})\dashv y= x\dashv a \dashv y,$$   where $a=\tilde{1}\vdash \tilde{1}\in M^{\tilde{1}}$.

Consider two cases.

(1) Let $a=\tilde{1}$. Then $x\vdash y=x\dashv\tilde{1}\dashv y=x\dashv y$ for all $x,y\in M$. Taking into account that $\tilde{1}\vdash \tilde{1}=\tilde{1}$ and for any $x\in M$  the following equalities hold:
$$x\vdash\tilde{1}=(x\dashv\tilde{1})\vdash\tilde{1}=x\dashv(\tilde{1}\vdash\tilde{1})=x\dashv\tilde{1}=x,$$
$$\tilde{1}\vdash x=\tilde{1}\vdash(\tilde{1}\dashv x)=(\tilde{1}\vdash\tilde{1})\dashv x=\tilde{1}\dashv x=x,$$
we conclude that in this case $(M^{\tilde{1}}, \vdash)=(M,\dashv)^{+1}$.

(2) Let $a\neq\tilde{1}$, and thus $a\in M$. We claim that $\tilde{1}\vdash x, x\vdash\tilde{1}\in M$ for any $x\in M^{\tilde{1}}$. Suppose that $\tilde{1}\vdash c=\tilde{1}$ for some $c\in M$. Then $e=\tilde{1}\dashv\tilde{1}=(\tilde{1}\vdash c)\dashv\tilde{1}=\tilde{1}\vdash(c\dashv\tilde{1})=\tilde{1}\vdash c=\tilde{1}$, and we have a contradiction. By analogy, $x\vdash\tilde{1}\in M$ for any $x\in M^{\tilde{1}}$.

For any $x\in M^{\tilde{1}}$ we have that
$$x\vdash\tilde{1}=(x\vdash\tilde{1})\dashv\tilde{1}=x\vdash(\tilde{1}\dashv\tilde{1})=x\vdash e,$$
$$\tilde{1}\vdash x=\tilde{1}\dashv(\tilde{1}\vdash x)=(\tilde{1}\dashv\tilde{1})\vdash x=e\vdash x.$$

Taking into account that for $a=\tilde{1}\vdash\tilde{1}\in M$
$$\tilde{1}\vdash\tilde{1}=\tilde{1}\dashv(\tilde{1}\vdash\tilde{1})\dashv\tilde{1}=\tilde{1}\dashv a\dashv\tilde{1}$$and for any $x\in M$
$$\tilde{1}\vdash x=e\vdash x=e\dashv a\dashv x=\tilde{1}\dashv a\dashv x\ \ \text{  and}$$
$$x\vdash\tilde{1}=x\vdash e=x\dashv a\dashv e=x\dashv a\dashv\tilde{1},$$
we conclude that $(M^{\tilde{1}}, \vdash)$ is a variant of $(M,\dashv)^{\tilde{1}}$ with the sandwich operation $x\vdash y=x\dashv a\dashv y$, where $a=\tilde{1}\vdash \tilde{1}\in M$.

Let $(M,\dashv)$ be a commutative monoid. 
Taking into account that for each $a\in M$ the variants with respect to $a$ of $(M,\dashv)^{\tilde{1}}$ and  $(M,\dashv)^{+1}$  coincide, and the set of interassociates of  $(M,\dashv)^{\tilde{1}}$ consists of  $(M,\dashv)^{+1}$ and variants of $(M,\dashv)^{\tilde{1}}$ with respect to all $a\in M$, we conclude that each interassociate of $(M,\dashv)^{\tilde{1}}$ is an interassociate of
$(M,\dashv)^{+{1}}$. Since $(M,\dashv)^{+{1}}$ is a monoid, all of its interassociates are variants. Consequently, $Int((M,\dashv)^{\tilde{1}})=Int((M,\dashv)^{+{1}})$. Let $(M^{\tilde{1}}, \vdash_1)$ and $(M^{\tilde{1}}, \vdash_2)$ be any two interassociate of $(M,\dashv)^{+{1}}$. Then $x \vdash_1 y=x\dashv a_1\dashv y$ and $x \vdash_2 y=x\dashv a_2\dashv y$ for some $a_1,a_2\in M^{\tilde{1}}$ and any $x,y\in M^{\tilde{1}}$.
Taking into account that $(M,\dashv)^{+{1}}$ is commutative and hence $x\vdash_1(y\vdash_2 z)=x\vdash_1(y\dashv a_2\dashv z)= x\dashv a_1 \dashv y\dashv a_2\dashv z= x\dashv a_2 \dashv y\dashv a_1\dashv z =x\vdash_2(y\dashv a_1\dashv z) =x\vdash_2(y\vdash_1 z)$, we conclude that $\vdash_1$ and $\vdash_2$ are strong interassociate.

\end{proof}

In the following Propositions~\ref{intLO+0} and~\ref{intRO+0} we recognize all (strong) interassociates of the semigroups $LO_X^{+0}$ and  $RO_X^{+0}$.

\begin{proposition}\label{intLO+0}
The set $Int(LO_X^{+0})$ consists of  all semigroups $LO^{\sim
0}_{A\leftarrow X}$, where $A\subset X$. Any two interassociates
of $LO_X^{+0}$ are interassociate with each other. The semigroup
$LO^{\sim 0}_{A\leftarrow X}$ is a strong interassociate of the
semigroup $LO^{\sim 0}_{B\leftarrow X}$ if and only if $A=B$ or
$A=\emptyset$ or $B=\emptyset$.
\end{proposition}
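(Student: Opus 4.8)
The plan is to treat the three assertions in turn, writing throughout $X^0=X\cup\{0\}$ and $\dashv$ for the operation of $LO_X^{+0}$, so that $x\dashv y=x$ for $x,y\in X$ and $0$ is a zero.

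First I would show that every interassociate has the stated form. Let $(X^0,\vdash)$ be any interassociate of $LO_X^{+0}$. Since $0$ is a zero of $(X^0,\dashv)$, the remark on zeros in doppelsemigroups forces $0$ to be a zero of $(X^0,\vdash)$ as well, so $0\vdash s=s\vdash 0=0$ for all $s$. The decisive step is to feed $x,y,z\in X$ into $(D_1)$: since $x\dashv y=x$, axiom $(D_1)$ reads $x\vdash z=x\dashv(y\vdash z)$, whose right-hand side equals $x$ when $y\vdash z\in X$ and equals $0$ when $y\vdash z=0$. As the left-hand side is independent of $y$ while $x\in X$ cannot equal $0$, for each fixed $z$ the product $y\vdash z$ must lie in $X$ for all $y$, or be $0$ for all $y$. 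Setting $A:=\{z\in X:\ x\vdash z=x\ \text{for all}\ x\in X\}$, I read off that $x\vdash y=x$ for $y\in A$ and $x\vdash y=0$ otherwise, that is $(X^0,\vdash)=LO^{\sim 0}_{A\leftarrow X}$ with $A\subseteq X$.

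For the converse inclusion I would exhibit each $LO^{\sim 0}_{A\leftarrow X}$ as arising from a left translation, so that Corollary~2.2 of~\cite{BoGN} applies. A transformation $l$ of $(X^0,\dashv)$ is a left translation precisely when $l(0)=0$ (the equation $l(x\dashv y)=l(x)\dashv y$ imposes nothing else, since $x\dashv y=x$ for $x,y\in X$). Hence the map $l_A$ sending $y\mapsto y$ for $y\in A$ and $y\mapsto 0$ otherwise is a left translation, and a direct check shows $x\dashv l_A(y)$ reproduces exactly the operation of $LO^{\sim 0}_{A\leftarrow X}$; so the latter is an interassociate. Combining the two directions yields $Int(LO_X^{+0})=\{LO^{\sim 0}_{A\leftarrow X}:A\subseteq X\}$.

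For the remaining assertions I would compute the nested products directly. Writing $\vdash_A$ for the operation with parameter $A$, an easy case analysis gives that both $(x\vdash_A y)\vdash_B z$ and $x\vdash_A(y\vdash_B z)$ equal $x$ when $y\in A$ and $z\in B$, and equal $0$ otherwise; the dual identity follows by interchanging $A$ and $B$, proving that any two interassociates of $LO_X^{+0}$ are mutually interassociate. For strongness I would compare $x\vdash_A(y\vdash_B z)$, which is $x$ iff $y\in A$ and $z\in B$, with $x\vdash_B(y\vdash_A z)$, which is $x$ iff $y\in B$ and $z\in A$. These agree for all $x,y,z$ exactly when the predicates $(y\in A\ \wedge\ z\in B)$ and $(y\in B\ \wedge\ z\in A)$ coincide on $X$, and the final task is to identify this with the condition $A=B$ or $A=\emptyset$ or $B=\emptyset$: these three cases make both predicates vacuously or identically equal, while if $A\neq B$ are both nonempty, choosing $a\in A\setminus B$ (or symmetrically $b\in B\setminus A$) together with any $b\in B$ makes $(y,z)=(a,b)$ satisfy the first predicate but not the second, so strongness fails. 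The genuinely substantive step is the independence-of-$y$ observation in the first part, which converts $(D_1)$ into the dichotomy defining $A$; everything else is finite verification, the only delicate point being the empty-set boundary cases in the strongness equivalence.
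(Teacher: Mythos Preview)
Your proof is correct and follows essentially the same route as the paper: the key dichotomy via $(D_1)$ (your ``independence-of-$y$'' observation is exactly the paper's computation $x\vdash b=(x\dashv a)\vdash b=x\dashv(a\vdash b)$), the direct case analysis for mutual interassociativity, and the predicate comparison for strongness all match. The only variation is that you establish the converse inclusion via left translations and Corollary~2.2 of~\cite{BoGN}, whereas the paper gets it as the special case $B=X$ of the mutual-interassociativity verification (since $LO_X^{+0}=LO^{\sim 0}_{X\leftarrow X}$); your route is valid but slightly redundant once you do the general case anyway.
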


\begin{proof}
Let $(X^{0}, \vdash)$ be an interassociate of the semigroup
$LO_X^{+0}$ with operation $\dashv$.

If $a\vdash b=0$ for some $a,b\in X$ then
$$x\vdash b=(x\dashv a)\vdash b=x\dashv(a\vdash b)=x\dashv 0=0$$ for any $x\in X^0$.

If $c\vdash d\neq 0$ for some $c,d\in X$ then
$$x\vdash d=(x\dashv c)\vdash d=x\dashv(c\vdash d)=x$$ for any $x\in X^0$.

Let $A=\{a\in X\ |\ x\vdash a\neq 0\text{ for any }x\in X\}$.
It follows that $(X^{0}, \vdash)$ coincides with $LO^{\sim 0}_{A\leftarrow X}$.

\smallskip

Let us show that for any $A,B\subset X$ the semigroups $LO^{\sim 0}_{A\leftarrow X}$ with operation $\dashv_A$ and $LO^{\sim 0}_{B\leftarrow X}$ with operation $\vdash_B$ are interassociate with each other.

To prove $x\vdash_B(y\dashv_A z)=(x\vdash_B y)\dashv_A z$ consider the following two cases:

\begin{itemize}
    \item if $z\in A$ then $x\vdash_B(y\dashv_A z)=x\vdash_B y=(x\vdash_B y)\dashv_A z$ for any $x,y\in X^0$;
    \item if $z\in X^0\setminus A$ then $x\vdash_B(y\dashv_A z)=x\vdash_B 0=0=(x\vdash_B y)\dashv_A z$ for any $x,y\in X^0$.
\end{itemize}

To prove $x\dashv_A(y\vdash_B z)=(x\dashv_A y)\vdash_B z$  consider the following two cases:

\begin{itemize}
    \item if $z\in B$ then $x\dashv_A(y\vdash_B z)=x\dashv_A y=(x\dashv_A y)\vdash_B z$ for any $x,y\in X^0$;
    \item if $z\in X^0\setminus B$ then $x\dashv_A(y\vdash_B z)=x\dashv_A 0=0=(x\dashv_A y)\vdash_B z$ for any $x,y\in X^0$.
\end{itemize}

Let us prove that a semigroup $LO^{\sim 0}_{A\leftarrow X}$  is a strong interassociate of a semigroup $LO^{\sim 0}_{B\leftarrow X}$  if and only if $A=B$ or $A=\emptyset$ or $B=\emptyset$.

If $A=B$ then $LO^{\sim 0}_{A\leftarrow X}=LO^{\sim 0}_{B\leftarrow X}$. So, $LO^{\sim 0}_{A\leftarrow X}$ is a strong interassociate of a semigroup $LO^{\sim 0}_{B\leftarrow X}$.

If $A=\emptyset$ or $B=\emptyset$ then $LO^{\sim 0}_{A\leftarrow X}$ or $LO^{\sim 0}_{B\leftarrow X}$ is a null semigroup. Since a null semigroup is a strong interassociate of any semigroup with zero, in this case, $LO^{\sim 0}_{A\leftarrow X}$ and $LO^{\sim 0}_{B\leftarrow X}$ are strong interassociate with each other.

Let $A$ and $B$ are different non-empty subsets of $X$. Show that $LO^{\sim 0}_{A\leftarrow X}$ and $LO^{\sim 0}_{B\leftarrow X}$ are  not strong interassociate with each other.
For this, it is sufficient to consider the following two cases.
\begin{itemize}
    \item There are exist $a\in A$ and $b\in B\setminus A$. Then
    $a\vdash_B(a\dashv_A b)=a\vdash_B 0=0$ while $a\dashv_A(a\vdash_B b)=a\dashv_A a=a\neq 0$.
    \item There are exist $b\in B$ and $a\in A\setminus B$. Then
$b\dashv_A(b\vdash_B a)=b\dashv_A 0=0$ while $b\vdash_B(b\dashv_A a)=b\vdash_B b=b\neq 0$.
\end{itemize}

\end{proof}

 Taking into account that $(X,\dashv) $ is an interassociate of $(X,\vdash)$ if and only if
$(X,\dashv^d) $ is an interassociate of $(X,\vdash^d)$, and for each $A\subset X$ the semigroup $LO^{\sim 0}_{A\leftarrow X}$ is dual to $RO^{\sim 0}_{A\leftarrow X}$, we conclude the following proposition.

\begin{proposition}\label{intRO+0}
    The set $Int(RO_X^{+0})$ consists of  all semigroups $RO^{\sim 0}_{A\leftarrow X}$, where $A\subset X$. Any two interassociates of $RO_X^{+0}$ are interassociate with each other.
    The semigroup $RO^{\sim 0}_{A\leftarrow X}$ is a strong interassociate of the semigroup $RO^{\sim 0}_{B\leftarrow X}$ if and only if $A=B$ or $A=\emptyset$ or $B=\emptyset$.
\end{proposition}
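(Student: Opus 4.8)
The plan is to derive all three assertions from Proposition~\ref{intLO+0} via the duality principle recorded in the paragraph immediately preceding the statement, since every object occurring here is the dual of the corresponding object for $LO_X^{+0}$. So essentially no new computation is needed beyond checking that each of the three properties involved is preserved under passing to dual operations.

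First I would assemble the three dualities I need. Adjoining a zero commutes with reversal of the operation, because for $x,y\in X$ one has $x\circ y=y*x$ while $0$ remains absorbing, so $(LO_X^{+0})^d=(LO_X^d)^{+0}=RO_X^{+0}$; thus $RO_X^{+0}$ is the dual of $LO_X^{+0}$. For each $A\subset X$ the semigroup $RO^{\sim 0}_{A\leftarrow X}$ is the dual of $LO^{\sim 0}_{A\leftarrow X}$, as already observed in Section~1. Finally, $(X^0,\dashv)$ is an interassociate of $(X^0,\vdash)$ if and only if $(X^0,\dashv^d)$ is an interassociate of $(X^0,\vdash^d)$, the point being that reversing both operations turns axiom $(D_1)$ for $(\dashv^d,\vdash^d)$ into axiom $(D_2)$ for $(\dashv,\vdash)$ and vice versa.

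With these in hand the first two assertions are immediate. A semigroup $(X^0,\vdash)$ is an interassociate of $RO_X^{+0}=(LO_X^{+0})^d$ exactly when $(X^0,\vdash^d)$ is an interassociate of $LO_X^{+0}$; by Proposition~\ref{intLO+0} this occurs precisely when $(X^0,\vdash^d)=LO^{\sim 0}_{A\leftarrow X}$ for some $A\subset X$, that is, when $(X^0,\vdash)=RO^{\sim 0}_{A\leftarrow X}$. Likewise, the statement that any two interassociates of $RO_X^{+0}$ are interassociate with each other is precisely the dual of the corresponding statement for $LO_X^{+0}$ in Proposition~\ref{intLO+0}, obtained by reversing all products.

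The step requiring genuine care is the strong-interassociativity claim, since I must confirm that the extra axiom $x\dashv(y\vdash z)=x\vdash(y\dashv z)$ is itself self-dual. I would verify that, modulo $(D_1)$ and $(D_2)$, this axiom is equivalent to $(x\vdash y)\dashv z=(x\dashv y)\vdash z$, and that reversing all products in the strong axiom for $(\dashv^d,\vdash^d)$ yields exactly this latter identity; hence a triple is strong if and only if its dual triple is strong. Consequently $RO^{\sim 0}_{A\leftarrow X}$ is a strong interassociate of $RO^{\sim 0}_{B\leftarrow X}$ if and only if $LO^{\sim 0}_{A\leftarrow X}$ is a strong interassociate of $LO^{\sim 0}_{B\leftarrow X}$, which by Proposition~\ref{intLO+0} holds exactly when $A=B$ or $A=\emptyset$ or $B=\emptyset$. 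This self-duality of the strong axiom is the only point I expect to need an explicit, if short, verification; everything else is a mechanical transfer along the duality.
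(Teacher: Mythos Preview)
Your proposal is correct and follows essentially the same duality argument as the paper, which derives the proposition in a single sentence from Proposition~\ref{intLO+0} via the facts that interassociativity is preserved under passing to dual operations and that $LO^{\sim 0}_{A\leftarrow X}$ and $RO^{\sim 0}_{A\leftarrow X}$ are dual. Your treatment is actually more careful than the paper's on one point: you explicitly verify that the \emph{strong} interassociativity axiom is self-dual (modulo $(D_1)$ and $(D_2)$), a step the paper leaves implicit.
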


Let  $a$ and $c$ be different elements of a set $X$. Consider the semigroup $LOB_X=(X, \dashv_c^a)$, where the binary operation $\dashv_c^a$ on  $X$ is defined in the following way:

$$x\dashv_c^a y=\begin{cases}
x,\text{ if } x\neq c \\
a,\text{ if } x=c\text{ and } y\neq c\\
c,\text{ if } x=y=c.
\end{cases}$$

\begin{proposition}\label{intROac}
If $(X,\vdash)$ is an interassociate of $(X,\dashv_c^a)$ then $(X,\vdash)=(X,\dashv_c^a)$
or $(X,\vdash)=LO_{X\setminus\{c\}\leftarrow X} =(X,\vdash^a_c)$, where
$$x\vdash^a_c y=\begin{cases}
x,\ x\neq c \\
a,\ x=c.
\end{cases}$$
All interassociates of $(X,\dashv_c^a)$ are strong.
\end{proposition}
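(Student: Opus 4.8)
The plan is to read off the shape of any interassociate $\vdash$ directly from the two axioms $(D_1)$ and $(D_2)$, exploiting the fact that in $(X,\dashv_c^a)$ every element distinct from $c$ is a left zero, and then to reduce everything to a single remaining degree of freedom, namely the value of $c\vdash c$.

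First I would fix an arbitrary $x\neq c$ in $(D_1)$. Since such $x$ is a left zero for $\dashv_c^a$, we have $x\dashv y=x$ and $x\dashv(y\vdash z)=x$, so $(D_1)$ collapses to $x\vdash z=x$ for all $z$; thus every element $x\neq c$ is a left zero for $\vdash$ as well, and the only unknown is the map $z\mapsto c\vdash z$. Next I would apply $(D_1)$ with $x=y=c$: using $c\dashv c=c$ it reduces to $c\vdash z=c\dashv(c\vdash z)$, and since $w=c\dashv w$ forces $w\in\{a,c\}$, I obtain $c\vdash z\in\{a,c\}$ for every $z$. Then I would apply $(D_2)$ with $x=c$ and $y\neq c$: here $y\dashv z=y$, so the right side is $c\vdash y$ while the left side is $(c\vdash y)\dashv z$; writing $v=c\vdash y\in\{a,c\}$, the equation demands $v\dashv z=v$ for all $z$, and choosing $z\neq c$ (which exists because $a\neq c$) rules out $v=c$ since $c\dashv z=a\neq c$. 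Hence $c\vdash y=a$ for all $y\neq c$, and the single surviving choice is $c\vdash c\in\{a,c\}$: the value $c$ reconstructs $\dashv_c^a$ exactly, while the value $a$ gives precisely $\vdash_c^a=LO_{X\setminus\{c\}\leftarrow X}$.

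Since this derivation uses only $(D_1)$ and $(D_2)$ and both candidate operations are already known to be associative, no separate associativity check is needed. To finish the strong claim I would verify the extra axiom $x\dashv(y\vdash z)=x\vdash(y\dashv z)$ for each candidate. For $\vdash=\dashv_c^a$ it is a tautology. For $\vdash=\vdash_c^a$ both sides equal $x$ when $x\neq c$; when $x=c$ the right side is $a$ because $c\vdash w=a$ for every $w$, and the left side is also $a$ because $y\vdash z$ is never equal to $c$ (it equals $y\neq c$ if $y\neq c$, and equals $a\neq c$ if $y=c$), so $c\dashv(y\vdash z)=a$. Thus both interassociates are strong. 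One may also observe that $\vdash_c^a$ is the interassociate $x\dashv_c^a l(y)$ coming from the constant left translation $l\equiv a$ of $(X,\dashv_c^a)$, which re-confirms via the cited Corollary 2.2 of~\cite{BoGN} that it is a genuine interassociate.

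I expect the only delicate point to be the bookkeeping in the case analysis around the non-left-zero element $c$: one must track when an intermediate product lands on $c$ rather than on $a$, and in the $(D_2)$ step one must invoke $a\neq c$ to guarantee a witness $z\neq c$ that separates the two possibilities for $c\vdash y$.
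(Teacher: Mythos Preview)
Your proof is correct and follows essentially the same route as the paper's: both arguments first show that every $x\neq c$ is a left zero for $\vdash$ (you derive this explicitly from $(D_1)$, the paper quotes the general fact that left zeros transfer to interassociates), then use $(D_1)$ with $x=y=c$ to pin $c\vdash z$ into $\{a,c\}$, then use $(D_2)$ with $y\neq c$ to force $c\vdash y=a$, leaving $c\vdash c$ as the sole free choice; the strong-axiom verification is likewise parallel. Your closing remark that $\vdash_c^a$ arises from the constant left translation $l\equiv a$ is a nice add-on not present in the paper, but otherwise the two proofs coincide.
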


\begin{proof} Since each element $z\in X\setminus\{c\}$ is a left zero of the semigroup $(X,\dashv_c^a)$, $z$ is a left zero of  $(X,\vdash)$.

For each $x\in X$ we have
$$c\vdash x=(c\dashv_c^a c)\vdash x=c\dashv_c^a(c\vdash x)\in\{a, c\}.$$

If $x\neq c$ then for each $y\in X$ the following equalities hold:
$$c\vdash x=c\vdash(x\dashv_c^a y)=(c\vdash x)\dashv_c^a y.$$
It follows that $c\vdash x$ is a left zero, and therefore, $c\vdash x\in X\setminus\{c\}$ for all $x\neq c$. Consequently, $c\vdash x=a$ for all $x\neq c$.

If $c\vdash c=c$ then $(X,\vdash)=(X,\dashv_c^a)$. If $c\vdash
c=a$ then $(X,\vdash)=(X,\vdash^a_c)$.

\smallskip

Let us show that $(X,\vdash^a_c)$ is a strong interassociate of
$(X,\dashv_c^a)$. Since each element $x\in X\setminus\{c\}$ is a
left zero of  $(X,\vdash^a_c)$ and $(X,\dashv_c^a)$,
$x\dashv_c^a (y\vdash^a_c z)=x=x\vdash^a_c (y\dashv_c^a z)$ for any
$x\in X\setminus\{c\}$ and $y, z\in X$. Taking into account that
$c\dashv_c^a (y\vdash^a_c z)\in c\dashv_c^a (X\setminus\{c\})=\{a\}$
and $c\vdash^a_c(y\dashv_c^a z)=a$, we conclude that
$c\dashv_c^a(y\vdash^a_c z)=c\vdash^a_c(y\dashv_c^a z)$ for any $y,
z\in X$.
\end{proof}

Dually one can  characterize all interassociates of the semigroup $ROB_X$.

\section{Three-element doppelsemigroups and their automorphism groups}\label{3eldsg}

I this section we describe up to isomorphism all (strong) doppelsemigroups with at most three elements and their automorphism groups.

Firstly, recall some useful facts  which we shall often use in
this section. In fact, each semigroup $(S,\dashv)$ can be consider
as a (strong) doppelsemigroup $(S,\dashv,\dashv)$ with the
automorphism group $\Aut(S,\dashv,\dashv)=\Aut(S,\dashv)$, and we
denote this {\em trivial} doppelsemigroup by $S$. As always, we denote
by $(S,\dashv_a)$ a variant of a semigroup $(S,\dashv)$, where
$x\dashv_a y= x\dashv a\dashv y$. If the semigroups $(S,\dashv_a)$
and $(S,\dashv_b)$ are variants of a commutative semigroup
$(S,\dashv)$ then the doppelsemigroup $(S,\dashv_a, \dashv_b)$ is
strong. If semigroup is a monoid then all of its interassociates
are variants. A semigroup coincides with each of its
interassociates if and only if it is a rectangular band, see
\cite[Lemma 5.5]{BoGN}. Every group is isomorphic to each of its
interassociates, see \cite{Dr}. Following the algebraic tradition, we take for a
model of the class of cyclic groups of order $n$ the multiplicative group
$C_n=\{z\in\mathbb C:z^n=1\}$ of $n$-th roots of $1$.

Let $(D_1,\dashv_1, \vdash_1)$ be such a doppelsemigroup that for
each doppelsemigroup $(D_2,\dashv_2, \vdash_2)$ the isomorphisms \break
$(D_2,\dashv_2)\cong (D_1,\dashv_1)$ and $(D_2,\vdash_2)\cong
(D_1,\vdash_1)$ imply $(D_2,\dashv_2, \vdash_2)\cong
(D_1,\dashv_1, \vdash_1)$. If $\mathbb S$ and $\mathbb T$ are
model semigroups of classes of semigroups isomorphic to
$(D_1,\dashv_1)$ and $(D_1, \vdash_1)$, respectively, then by
$\mathbb S\between\mathbb T$  we
denote a model doppelsemigroup of the class of doppelsemigroups
isomorphic to $(D_1,\dashv_1, \vdash_1)$.

Note that if $(D,\dashv, \vdash)$ is a (strong) doppelsemigroup then $(D,\vdash, \dashv)$ is a (strong) doppelsemigroup as well.
In general case, the doppelsemigroups $(D,\dashv, \vdash)$ and $(D,\vdash, \dashv)$ are not isomorphic.
It is clear that  $\Aut(D,\dashv, \vdash)=\Aut(D,\vdash, \dashv)$.

\bigskip

It is well-known that there are exactly five pairwise non-isomorphic semigroups having
two elements: $C_2$, $L_2$, $O_2$, $LO_2$, $RO_2$.

Consider the cyclic group $C_2=\{-1, 1\}$ and find up to isomorphism all doppelsemigroups
$(D,\dashv, \vdash)$ with \break $(D,\dashv)\cong C_2$. Because  $C_2$ is a monoid, all of its interassociates are variants. Since every group is isomorphic to each of its interassociates, in this case there are two (strong) doppelsemigroups up to isomorphism: $C_2$ and $C_2\between C_2^{-1}=(\{-1, 1\}, \cdot, \cdot_{-1})$. These doppelsemigroups are not isomorphic. Indeed, let $\psi$ is an isomorphism from
$(\{-1, 1\}, \cdot, \cdot_{-1})$ to $(\{-1, 1\}, \cdot, \cdot)$. Taking into account that $-1$ is a neutral element of the group $(\{-1, 1\},\cdot_{-1})$ and $\psi$ must preserve the neutral elements of both groups $(\{-1, 1\},\cdot)$ and $(\{-1, 1\},\cdot_{-1})$ of the doppelsemigroup
$(\{-1, 1\}, \cdot, \cdot_{-1})$, we conclude that $\psi(1)=1$ and $\psi(-1)=1$, which contradicts the assertion that $\psi$ is an isomorphism. Since $\Aut(C_2)\cong C_1$, $\Aut(C_2\between C_2^{-1})\cong C_1$.

Since $LO_2$ and  $RO_2$ are rectangular bands, all their interassociates coincide with them, and therefore, in this case there are only two doppelsemigroups: $LO_2$ and  $RO_2$.

It is well-known that a null semigroup $O_X$ is an interassociate
of each semigroup on $X$ with the same zero. Consequently, $O_2$ has two non-isomorphic interassociates: $O_2$ and $L_2$. Taking into account that the semilattice $L_2$ is the monoid $(\{0,1\},\min )$, we
conclude that $L_2$ has two non-isomorphic interassociates: $L_2$ and $(\{0,1\},\min_{0} )=O_2$. By
Propositions~\ref{nulldop} and~\ref{uniquedop}, it follows that
the last four non-isomorphic doppelsemigroups are  $O_2$, $O_2\between L_2$, $L_2$ and $L_2\between O_2$. Note that commutativity of $L_2$ implies that all these doppelsemigroups are strong. By
Proposition~\ref{autnulldop}, $\Aut(L_2\between O_2)=\Aut(O_2\between L_2)=\Aut(L_2)\cong C_1$.

Consequently, there exist  $6$  pairwise non-isomorphic commutative two-element doppelsemigroups  and $2$  non-isomorphic non-commutative doppelsemigroups of order $2$. All two-element  doppelsemigroups are strong.

In the following table we present  up to isomorphism all two-element  doppelsemigroups and their automorphism groups.

\begin{table}[ht]
    \centering

    \begin{tabular}{|c|cccccc|cc|}
        \hline
        $D$ & $C_2$  & $O_2$ & $L_2$  & $C_2\between C_2^{-1}$ & $O_2\between L_2$ & $L_2\between O_2$ & $LO_2$ & $RO_2$  \\
        \hline
        $\Aut(D)$ &  $C_1$ & $C_1$ & $C_1$ & $C_1$ & $C_1$ & $C_1$  & $C_2$ & $C_2$  \\
        \hline
    \end{tabular}

    \smallskip
    \caption{Two-element doppelsemigroups and their automorphism groups}\label{tab:DopSG2}
\end{table}

\smallskip

In the remaining part of the paper we concentrate on describing up to isomorphism all  three-element (strong)  doppelsemigroups.

Among $19683$ different binary operations on a three-element set
$S$ there are exactly $113$ operations which are
associative. In other words, there exist exactly
$113$ three-element semigroups, and many of these are isomorphic
so that there are essentially only $24$ pairwise non-isomorphic
semigroups of order $3$,  see \cite{Ch, G8, G9}.

Among $24$  pairwise non-isomorphic semigroups of order $3$ there are $12$  commutative
semigroups. The rest $12$ pairwise non-isomorphic non-commutative
three-element semigroups are divided into the pairs of dual semigroups that are antiisomorphic. The automorphism groups of dual semigroups coincide.

List of all pairwise non-isomorphic semigroups of order $3$ and their automorphism groups are presented in Table~\ref{tab:auts3} and Table~\ref{tab:autns3} taken from~\cite{G9}.

\begin{table}[ht]
    \centering

    \begin{tabular}{|c|cccccccccccc|}
        \hline
        $S$ & $C_3$ & $O_3$  & $\M_{2,2}$ & $C_2^{+1}$ &  $C_2^{\tilde{1}}$ & $\M_{3,1}$&  $O_2^{+1}$ & $O_2^{+0}$ & $L_3$ & $C_2^{+0}$    & $O_3^2$  & $O_3^1$ \\
        \hline
        $\Aut(S)$ &  $C_2$ & $C_2$ & $C_1$ & $C_1$ & $C_{1}$  & $C_{1}$ & $C_{1}$ & $C_{1}$ & $C_{1}$ & $C_{1}$ & $C_{2}$ & $C_{1}$  \\
        \hline
    \end{tabular}

    \smallskip
    \caption{Commutative semigroups $S$ of order  $3$ and their automorphism groups}\label{tab:auts3}
\end{table}

\begin{table}[ht]
    \centering
    \resizebox{14cm}{!}{
        \begin{tabular}{|c|c|c|c|c|c|c|}
            \hline
            $S$ & $LO_3$, $RO_3$  & $LO_2^{+0}$, $RO_2^{+0}$ &  $LO^{\sim 0}_{1\leftarrow2}$, $RO^{\sim 0}_{1\leftarrow2}$ & $LO_2^{+1}$, $RO_2^{+1}$ & $LOB_3$, $ROB_3$  & $LO_{2\leftarrow 3}$, $RO_{2\leftarrow 3}$ \\
            \hline
            $\Aut(S)$ & $S_3$ & $C_2$ & $C_1$ & $C_{2}$ & $C_{1}$ & $C_{2}$ \\
            \hline
        \end{tabular}
    }
    \smallskip
    \caption{Non-commutative three-element  semigroups and their automorphism groups}\label{tab:autns3}
\end{table}

In the sequel, we divide our investigation into cases. In the case
of a semigroup $S$ we shall find all doppelsemigroups $(D,\dashv,
\vdash)$ such that $(D,\dashv)$ is isomorphic to $S$.

\smallskip

{\noindent \bf Case $C_3$}.   Up to isomorphism, the multiplicative group $C_3=\{1, a, a^{-1}\}$, where $a=e^{2\pi i/3}$,
 is a unique group of order $3$. Since  $C_3$ is a monoid, all of its interassociates are variants.
 Because $C_3$ is commutative, all of its interassociates are strong.  Since every group is isomorphic to each of its
  interassociates, in this case there are exactly three (strong) doppelsemigroups: $C_3$, $(C_3, \cdot, \cdot_{a})$ and $(C_3, \cdot, \cdot_{a^{-1}})$.  It is easy to check the map $\psi : C_3\to C_3$, $\psi(g)=g^{-1}$ (where $g^{-1}$ is the inverse of $g$ in the group $(C_3, \cdot)$), is an isomorphism from   $(C_3, \cdot, \cdot_{a})$ to $(C_3, \cdot, \cdot_{a^{-1}})$.
We denote by $C_3\between C_3^{-1}$ the doppelsemigroup $(C_3, \cdot, \cdot_{a^{-1}})$. By the same arguments as for the group $C_2$, we conclude that the doppelsemigroups $C_3$ and $C_3\between C_3^{-1}$ are non-isomorphic. Let $\psi: C_3\to C_3$ is an automorphism of the
doppelsemigroup $(C_3, \cdot, \cdot_{a^{-1}})$. Taking into account
that $1$ is the identity of the group $(C_3, \cdot)$ and $a$ is
the identity of the group $(C_3, \cdot_{a^{-1}})$, we conclude that
$\psi(1)=1$ and $\psi(a)=a$. Consequently, $\psi(a^{-1})=a^{-1}$,
and $\psi$ is the identity automorphism. It follows that
$\Aut(C_3\between C_3^{-1})\cong C_1$. 

\smallskip

{\noindent \bf Case $O_3$}. A null semigroup $O_3$ is a (strong) interassociate of each three-element semigroup with the same zero. Thus, up to isomorphism there are the following 12 (strong) doppelsemigroups: $O_3$, $O_3\between \M_{3,1}$, $O_3\between O_2^{+1}$, $O_3\between O_2^{+0}$, $O_3\between L_3$,  $O_3\between C_2^{+0}$, $O_3\between O_3^2$, $O_3\between O_3^1$,   $O_3\between LO_2^{+0}$, $O_3\between RO_2^{+0}$,
$O_3\between LO^{\sim 0}_{1\leftarrow2}$, $O_3\between RO^{\sim 0}_{1\leftarrow2}$.
According to Proposition~\ref{nulldop}, up to isomorphism there are no other doppelsemigroups $(D,\dashv, \vdash)$ such that $(D,\dashv)\cong O_3$. By Proposition~\ref{autnulldop}, $\Aut(O_3\between S)\cong \Aut(S)$
for any three-element semigroup $S$ with zero.

\smallskip

{\noindent \bf Case $\M_{2,2}$}. Consider the monogenic semigroup
$\M_{2,2}=\{a,a^2,a^3\ |\ a^4=a^2\}$. There are three
interassociates of this semigroup: $(\M_{2,2}, *_k)$, where
$a^x*_ka^y=a^{x+y+k-2}$ for every $a^x, a^y\in \M_{2,2}$ and
$k\in\{1,2,3\}$, see~\cite[Theorem 1.1]{GLN}. It easy to check
that $(\M_{2,2}, *_1)=\{a^2, a^3\}^{+1}\cong C_2^{+1}$,
$(\M_{2,2}, *_2)=(\M_{2,2}, *)$ and $(\M_{2,2}, *_3)=\{a^2,
a^3\}^{\tilde{1}}\cong C_2^{\tilde{1}}$. So, in this case there
are three  doppelsemigroups: $\M_{2,2}$, $\M_{2,2}\between
C_2^{+1}$ and $\M_{2,2}\between C_2^{\tilde{1}}$. Since all three
interassociates of $\M_{2,2}$  are pairwise non-isomorphic,
according to Proposition~\ref{uniquedop} we conclude that up to
isomorphism there are no other doppelsemigroups $(D,\dashv,
\vdash)$ such that $(D,\dashv)\cong \M_{2,2}$. Since
$\Aut(\M_{2,2})\cong C_1$, $\Aut(\M_{2,2}\between C_2^{+1})\cong C_1$ and $\Aut(\M_{2,2}\between C_2^{\tilde{1}})\cong C_1$.

\smallskip

{\noindent \bf Case $C_2^{+1}$}. Since $C_2^{+1}$ is a monoid, all of its interassociates are variants. Let $e$ be an extra identity adjoined to $C_2=\{-1,1\}$. Then $(\{-1,1, e\}, \cdot_e)= C_2^{+1}$,
$(\{-1,1, e\}, \cdot_1)\cong C_2^{\tilde{1}}$ and $(\{-1,1, e\}, \cdot_{-1})\cong \M_{2,2}$. Therefore, there are three  doppelsemigroups: $C_2^{+1}$,  $C_2^{+1}\between C_2^{\tilde{1}}$ and $C_2^{+1}\between\M_{2,2}$. Since $C_2^{+1}$ is a commutative monoid, all these doppelsemigroups are strong. Taking into account that all three interassociates of $C_2^{+1}$  are pairwise non-isomorphic, by Proposition~\ref{uniquedop} we conclude that up to isomorphism there are no other doppelsemigroups $(D,\dashv, \vdash)$ such that $(D,\dashv)\cong C_2^{+1}$.
Since $\Aut(C_2^{+1})\cong C_1$, $\Aut(C_2^{+1}\between C_2^{\tilde{1}}) \cong C_1$ and $\Aut(C_2^{+1}\between\M_{2,2}) \cong C_1$.

\smallskip

{\noindent \bf Case $C_2^{\tilde{1}}$}. According to
Proposition~\ref{intmonoid+almost1} the semigroup
$C_2^{\tilde{1}}$ has three interassociates. As we have seen in
previous cases, these interassociates must be isomorphic to
$C_2^{\tilde{1}}$, $C_2^{+1}$ and $\M_{2,2}$. Taking into account
that, by Proposition~\ref{intmonoid+almost1}, 
all interassociates of $C_2^{\tilde{1}}$ are strong, we conclude that
in this case there are three pairwise non-isomorphic strong doppelsemigroups:
$C_2^{\tilde{1}}$, $C_2^{\tilde{1}}\between C_2^{+1}$ and $C_2^{\tilde{1}}\between
\M_{2,2}$.  Since $\Aut(C_2^{\tilde{1}})\cong C_1 $, $\Aut(C_2^{\tilde{1}}\between C_2^{+1})\cong C_1$
and $\Aut(C_2^{\tilde{1}}\between \M_{2,2})\cong C_1$.

\smallskip

{\noindent \bf Case $\M_{3,1}$}. Consider the monogenic semigroup
$\M_{3,1}=\{a,a^2,a^3\ |\ a^4=a^3\}$. There are three
interassociates of this semigroup: $(\M_{3,1}, *_k)$, where
$a^x*_ka^y=a^{x+y+k-2}$ for every $a^x, a^y\in \M_{3,1}$ and
$k\in\{1,2,3\}$, see~\cite[Theorem 1.1]{GLN}. It easy to check
that $(\M_{3,1}, *_1)=\{a^2, a^3\}^{+1}\cong O_2^{+1}$,
$(\M_{3,1}, *_2)=(\M_{3,1}, *)$ and $(\M_{3,1}, *_3)\cong O_3$.
So, in this case we have three  doppelsemigroups: $\M_{3,1}$,
$\M_{3,1}\between O_2^{+1}$ and $\M_{3,1}\between O_3$. Since all three interassociates of
$\M_{3,1}$  are pairwise non-isomorphic, according to
Proposition~\ref{uniquedop} we conclude that up to isomorphism
there are no other doppelsemigroups $(D,\dashv, \vdash)$ such that
$(D,\dashv)\cong \M_{3,1}$. Since $\Aut(\M_{3,1})\cong C_1$,
$\Aut(\M_{3,1}\between O_2^{+1})\cong C_1$ and $\Aut(\M_{3,1}\between O_3)\cong C_1$.

\smallskip

{\noindent \bf Case $O_2^{+1}$}. Because the semigroup  $O_2^{+1}$ is a monoid, all of its interassociates are variants. Since there are only three variants of a three-element semigroup,  previous cases imply that these interassociates isomorphic to $O_2^{+1}$, $\M_{3,1}$ and $O_3$.
Taking into account that $O_2^{+1}$ is a commutative monoid, we conclude that in this case there are three pairwise non-isomorphic strong doppelsemigroups: $O_2^{+1}$, $O_2^{+1}\between\M_{3,1}$ and $O_2^{+1}\between O_3$.  Since $\Aut(O_2^{+1})\cong C_1$, $\Aut(O_2^{+1}\between\M_{3,1})\cong C_1$ and $\Aut(O_2^{+1}\between O_3)\cong C_1$.

\smallskip

{\noindent \bf Case $O_2^{+0}$}. Proposition~\ref{intnull+0}
implies that there are three interassociates of semigroup
$O_2^{+0}$, and all these interassociates are strong. They are
isomorphic to $O_2^{+0}$, $L_3$ and $O_3$. So, in this case there
are three strong doppelsemigroups $O_2^{+0}$, $O_2^{+0}\between
L_3\cong (O_2\between L_2)^{+0}$ and $O_2^{+0}\between O_3$. Since all three interassociates
of $O_2^{+0}$  are pairwise non-isomorphic, according to
Proposition~\ref{uniquedop} we conclude that up to isomorphism
there are no other doppelsemigroups $(D,\dashv, \vdash)$ such that
$(D,\dashv)\cong O_2^{+0}$. Since $\Aut(O_2^{+0})\cong C_1$,
$\Aut(O_2^{+0}\between L_3)\cong C_1$ and $\Aut(O_2^{+0}\between O_3)\cong C_1$.

\smallskip

{\noindent \bf Case $L_3$}. Since the linear semilattice  $L_3$ is a monoid, all of its interassociates are variants. Three-element semigroup has only three variants, thus  previous cases imply that these interassociates isomorphic to $L_3$, $O_2^{+0}$ and $O_3$.
Therefore, in this case we have the trivial doppelsemigroup $L_3$ and two (strong) doppelsemigroups $L_3 \between O_2^{+0}\cong (L_2 \between O_2)^{+0}$ and $L_3 \between O_3$.  Since $\Aut(L_3)\cong C_1$,
$\Aut(L_3 \between O_2^{+0})\cong C_1$ and $\Aut(L_3 \between O_3)\cong C_1$.

\smallskip

{\noindent \bf Case $C_2^{+0}$}. Consider the semigroup $C_2^{+0}$ isomorphic to a commutative monoid $(\{-1,1,0\}, \cdot)$ with  zero $0$. Except a null semigroup $O_3$, this monoid has two isomorphic variants
$(\{-1,1,0\}, \cdot)$ and $(\{-1,1,0\}, \cdot_{-1})$.
In this case there are three (strong) doppelsemigroups: $C_2^{+0}\between O_3$, $C_2^{+0}$ and $(\{-1,1,0\}, \cdot, \cdot_{-1})$. These doppelsemigroups are not isomorphic. Indeed, let $\psi$ is an isomorphism from
$(\{-1,1,0\}, \cdot, \cdot_{-1})$ to $(\{-1,1,0\}, \cdot, \cdot)$. Taking into account that $-1$ is a neutral element of the semigroup $(\{-1,1,0\},\cdot_{-1})$ and $\psi$ must preserve the neutral elements of both semigroups $(\{-1,1,0\},\cdot)$ and $(\{-1,1,0\},\cdot_{-1})$ of the doppelsemigroup
$(\{-1,1,0\}, \cdot, \cdot_{-1})$, we conclude that $\psi(-1)=1$ and $\psi(1)=1$, which contradicts the assertion that $\psi$ is an isomorphism. Taking into account that $(\{-1,1,0\}, \cdot_{-1})\cong (C_2^{-1})^{+0}$, where $C_2^{-1}=(\{-1,1\}, \cdot_{-1})$, we denote by $C_2^{+0}\between (C_{2}^{-1})^{+0}$ the doppelsemigroup $(\{-1,1,0\}, \cdot, \cdot_{-1})$. It is easy to see that $C_2^{+0}\between (C_{2}^{-1})^{+0}\cong (C_2\between C_{2}^{-1})^{+0}$, and hence 
$\Aut(C_2^{+0}\between (C_{2}^{-1})^{+0})\cong \Aut((C_2\between C_{2}^{-1})^{+0})\cong \Aut(C_2\between C_{2}^{-1})\cong C_1$.
Since $\Aut(C_2^{+0})\cong C_1$,  $\Aut(C_2^{+0}\between O_3)\cong C_1$.

\smallskip

{\noindent \bf Case $O_3^2$}. Consider the non-linear semilattice $O_3^2$ isomorphic to the semigroup $\{a,b, 0\}$ with the operation $\dashv$:
$$x\dashv y=\begin{cases}
x,\text{ if } y=x\in\{a,b\} \\
0,\ \text{otherwise.}
\end{cases}$$

According to Proposition~\ref{intnullA}, this semigroup has four (strong) interassociates: $O_3^2$,
$O_3$, $(\{a,b, 0\},\vdash_a)$ and $(\{a,b, 0\},\vdash_b)$, where for $i\in\{a,b\}$
$$x\vdash_i y=\begin{cases}
x,\text{ if } y=x=i \\
0,\ \text{otherwise.}
\end{cases}$$
It is easy to check that the map $\psi : \{a,b, 0\}\to\{a,b, 0\}$, $\psi(a)=b$, $\psi(b)=a$ and $\psi(0)=0$, is a doppelsemigroup isomorphism from $(\{a, b, 0\}, \dashv,\vdash_a)$ to $(\{a,b, 0\}, \dashv,\vdash_b)$.

Therefore, in this case there are three pairwise non-isomorphic (strong) doppelsemigroups: $O_3^2$,
$O_3^2\between O_3$ and $O_3^2\between O_3^1$. Since $\Aut(O_3^1)\cong C_1$, $\Aut(O_3^2\between O_3^1)\cong C_1$. By Proposition~\ref{autnulldop},
$\Aut(O_3^2\between O_3)\cong \Aut(O_3\between O_3^2)\cong\Aut(O_3^2)\cong C_2$.

\smallskip

{\noindent \bf Case $O_3^1$}. Consider the last commutative semigroup $O_3^1$ isomorphic to the semigroup
$(\{a,b, 0\},\vdash_a)$ from the previous case. By Proposition~\ref{intnullA}, this semigroup has the same four (strong) interassociates as $O_3^2$. Show that the doppelsemigroups
$(\{a,b, 0\},\vdash_a, \vdash_a)$ and $(\{a,b, 0\},\vdash_a, \vdash_b)$ are not isomorphic. Suppose that $\psi$ is an isomorphism from $(\{a,b, 0\},\vdash_a, \vdash_b)$ to
$(\{a,b, 0\},\vdash_a, \vdash_a)$. Then $\psi$ must preserve a unique non-zero idempotent of these doppelsemigroups. Therefore, $\psi(a)=a$ and $\psi(b)=a$, which contradicts the assertion that $\psi$ is an isomorphism. Denote by $O_3^a\between O_3^b$ the doppelsemigroup $(\{a,b, 0\},\vdash_a, \vdash_b)$.  Thus, in this case we have four non-isomorphic (strong) doppelsemigroups: $O_3^1$, $O_3^a\between O_3^b$, $O_3^1\between O_3^2$ and $O_3^1\between O_3$. Since $\Aut(O_3^a)\cong\Aut(O_3^1)\cong C_1$,  $\Aut(O_3^a\between O_3^b)\cong C_1$, $\Aut(O_3^1\between O_3^2)\cong C_1$ and $\Aut(O_3^1\between O_3)\cong C_1$.

\bigskip

Let $(D,\dashv, \vdash)$ be a doppelsemigroup. Denote by
$(D,\dashv, \vdash)^d$ its {\em dual} doppelsemigroup
$(D,\dashv^d, \vdash^d)$, where $x\dashv^d y=y\dashv x$ and
$x\vdash^d y=y\vdash x$. In fact, $(D,\dashv, \vdash)^d$ is a
(strong) doppelsemigroup if and only if $(D,\dashv, \vdash)$ is a
(strong) doppelsemigroup. So, non-commutative doppelsemigroups are
divided into the pairs of dual doppelsemigroups. A map $\psi :
D_1\to D_2$ is a isomorphism from a doppelsemigroup
$(D_1,\dashv_1, \vdash_1)$ to $(D_2\dashv_2, \vdash_2)$ if and
only if  $\psi$ is a isomorphism from  a doppelsemigroup
$(D_1,\dashv_1, \vdash_1)^d$ to $(D_2\dashv_2, \vdash_2)^d$. Thus,
$\Aut((D,\dashv, \vdash)^d)=\Aut(D,\dashv, \vdash)$.

It follows that it is sufficient to consider non-commutative three-element  semigroups
$LO_3$, $LO_2^{+0}$,  $LO^{\sim 0}_{1\leftarrow2}$, $LO_2^{+1}$,  $LOB_3$, $LO_{2\leftarrow 3}$.
The cases of semigroups $RO_3$, $RO_2^{+0}$,  $RO^{\sim 0}_{1\leftarrow2}$, $RO_2^{+1}$,  $ROB_3$, $RO_{2\leftarrow 3}$ we shall get using the duality.

\smallskip

\smallskip

{\noindent \bf Case $LO_3$}. Since $LO_3$ is a rectangular band, all its interassociates coincide with $LO_3$, and therefore, in this case there is a unique doppelsemigroup $LO_3$.

\smallskip

{\noindent \bf Case $LO_2^{+0}$}. Consider the semigroup $LO_2^{+0}$ isomorphic to  $\{a,b, 0\}$ with the operation $\dashv$:
$$x\dashv y=\begin{cases}
x,\text{ if } y\in\{a,b\} \\
0,\text{ if } y=0.
\end{cases}$$

According to Proposition~\ref{intLO+0}, this semigroup has four interassociates: $LO_2^{+0}$, $O_3$, $(\{a,b, 0\},\vdash_a)$ and $(\{a,b, 0\},\vdash_b)$, where for $i\in\{a,b\}$
$$x\vdash_i y=\begin{cases}
x,\text{ if } y=i \\
0,\text{ if } y\neq i.
\end{cases}$$
It is easy to check that the map $\psi : \{a,b, 0\}\to\{a,b, 0\}$, $\psi(a)=b$, $\psi(b)=a$ and $\psi(0)=0$, is a doppelsemigroup isomorphism from $(\{a, b, 0\}, \dashv,\vdash_a)$ to $(\{a,b, 0\}, \dashv,\vdash_b)$. Since $(\{a, b, 0\}, \vdash_a)\cong(\{a,b, 0\}, \vdash_b)\cong LO^{\sim 0}_{1\leftarrow 2}$, denote by $LO_2^{+0}\between LO^{\sim 0}_{1\leftarrow 2}$ the doppelsemigroup $(\{a, b, 0\}, \dashv,\vdash_a)\cong(\{a,b, 0\}, \dashv,\vdash_b)$.

Thus, in this case we have three pairwise non-isomorphic doppelsemigroups: $LO_2^{+0}$, $LO_2^{+0}\between O_3$ and $LO_2^{+0}\between LO^{\sim 0}_{1\leftarrow 2}$. Consequently, up to isomorphism there are no other doppelsemigroups $(D,\dashv, \vdash)$ such that $(D,\dashv)\cong LO_2^{+0}$.
By Proposition~\ref{intLO+0}, the doppelsemigroups $LO_2^{+0}$ and $LO_2^{+0}\between O_3$ are strong while $LO_2^{+0}\between LO^{\sim 0}_{1\leftarrow 2}$ is not strong.

According to Proposition~\ref{autLZdop}, $\Aut(LO_2^{+0}\between LO^{\sim 0}_{1\leftarrow 2})\cong\Aut(LO^{\sim 0}_{1\leftarrow 2})\cong C_1$. 

By Proposition~\ref{autnulldop},  $\Aut(LO_2^{+0}\between O_3)\cong\Aut(O_3 \between LO_2^{+0})\cong\Aut(LO_2^{+0})\cong  C_2$.

\smallskip

{\noindent \bf Case $LO^{\sim 0}_{1\leftarrow 2}$}. Consider the
semigroup $LO^{\sim 0}_{1\leftarrow 2}$ isomorphic to the
semigroup $(\{a,b, 0\},\vdash_a)$ from the previous case. Since
this semigroup is the last semigroup with  zero, 
the previous cases imply that it has the following interassociates:
$O_3$, $LO_2^{+0}$, and interassociates that isomorphic to
$(\{a,b, 0\},\vdash_a)$.

Consider  interassociates of $(\{a,b, 0\},\vdash_a)$ that isomorphic to
$(\{a,b, 0\},\vdash_a)$. Since an isomorphism $\psi$ must preserve
a unique right identity $a$ and  zero $0$, we conclude that
$\psi(a)$ must be a right identity and $\psi(0)=0$. Thus, $(\{a,b,
0\},\vdash_b)$ is a unique different from $(\{a,b, 0\},\vdash_a)$
interassociate isomorphic to $(\{a,b, 0\},\vdash_a)$.
 Show that the doppelsemigroups
$(\{a,b, 0\},\vdash_a, \vdash_a)$ and $(\{a,b, 0\},\vdash_a,
\vdash_b)$ are not isomorphic. Suppose that $\psi$ is an
isomorphism from $(\{a,b, 0\},\vdash_a, \vdash_b)$ to $(\{a,b,
0\},\vdash_a, \vdash_a)$. Then $\psi$ must preserve right
identities $a$ and $b$ of  the semigroups $(\{a,b, 0\},\vdash_a)$
and $(\{a,b, 0\}, \vdash_b)$, respectively. Therefore, $\psi(a)=a$
and $\psi(b)=a$, which contradicts the assertion that $\psi$ is an
isomorphism. Denote by $LO^{\sim 0}_{a\leftarrow 2}\between
LO^{\sim 0}_{b\leftarrow 2}$ the doppelsemigroup $(\{a,b,
0\},\vdash_a, \vdash_b)$.  Thus, up to isomorphism, $LO^{\sim 0}_{1\leftarrow
2}$, $LO^{\sim 0}_{1\leftarrow 2}\between O_3$, $LO^{\sim 0}_{1\leftarrow 2}\between LO_2^{+0}$ and $LO^{\sim 0}_{a\leftarrow 2}\between LO^{\sim
0}_{b\leftarrow 2}$ are the last four doppelsemigroups with zero.
Since $\Aut(LO^{\sim 0}_{a\leftarrow
2})\cong\Aut(LO^{\sim 0}_{1\leftarrow 2})\cong C_1$,
 $\Aut(LO^{\sim 0}_{1\leftarrow 2}\between LO_2^{+0})\cong C_1$, $\Aut(LO^{\sim 0}_{1\leftarrow 2}\between O_3)\cong C_1$ and $\Aut(LO^{\sim 0}_{a\leftarrow 2}\between LO^{\sim 0}_{b\leftarrow
2})\cong C_1$.
By Proposition~\ref{intLO+0}, the doppelsemigroups $LO^{\sim
0}_{1\leftarrow 2}$ and $LO^{\sim 0}_{1\leftarrow 2}\between O_3$ are strong while $LO^{\sim 0}_{1\leftarrow 2}\between LO_2^{+0}$ and  $LO^{\sim 0}_{a\leftarrow 2}\between LO^{\sim 0}_{b\leftarrow 2}$ are not strong.

\smallskip

{\noindent \bf Case $LO_2^{+1}$}. Consider a monoid $LO_2^{+1}$ with operation $\dashv$ and identity $1$, where $LO_2=\{a, b\}$ is a two-element left zero semigroup. Since each interassociate of $LO_2^{+1}$ is a variant, we conclude that except $LO_2^{+1}$ there two  interassociates:  $(\{a, b, 1\}, \vdash_a)$ and
$(\{a, b, 1\}, \vdash_b)$ isomorphic to $LO_{2\leftarrow 3}$,
where for $i\in\{a,b\}$
$$x\vdash_i y=\begin{cases}
x,\ x\neq 1 \\
i,\ x=1.
\end{cases}$$
It is easy to check that the map $\psi : \{a,b, 1\}\to\{a,b, 1\}$, $\psi(a)=b$, $\psi(b)=a$ and $\psi(0)=0$, is a doppelsemigroup isomorphism from $(\{a, b, 1\}, \dashv,\vdash_a)$ to $(\{a,b, 1\}, \dashv,\vdash_b)$.
Since $1\dashv(b\vdash_a b)=1\dashv b=b$ while $1\vdash_a(b\dashv b)=1\vdash_a b=a\neq b$, the doppelsemigroup  $(\{a, b, 1\}, \dashv,\vdash_a)$ is not strong.
 Therefore, in this case there are two pairwise non-isomorphic  doppelsemigroups: $LO_2^{+1}$ and $LO_2^{+1}\between LO_{2\leftarrow 3}$. The semigroup
 $LO_2^{+1}$ is strong while $LO_2^{+1}\between LO_{2\leftarrow 3}$ is not strong.
By Proposition \ref{autLZdop},  $\Aut(LO_2^{+1}\between LO_{2\leftarrow 3})\cong  \Aut(LO_{2\leftarrow 3})\cong C_2$.

\smallskip

{\noindent \bf Case $LOB_3$}. Consider a non-commutative band
$LOB_3$ isomorphic to the semigroup $\{a,b,c \}$ with the
operation $\dashv_c^a$, where

$$x\dashv_c^a y=\begin{cases}
x,\text{ if } x\neq c \\
a,\text{ if } x=c\text{ and } y\neq c\\
c,\text{ if } x=y=c.
\end{cases}$$
By Proposition~\ref{intROac}, $LOB_3$ has two  interassociates isomorphic to $LOB_3$ and $LO_{2\leftarrow 3}$. According to Proposition~\ref{nulldop}, up to isomorphism there are no other doppelsemigroups $(D,\dashv, \vdash)$ such that $(D,\dashv)\cong LOB_3$. Thus, in this case there are two non-isomorphic doppelsemigroups: $LOB_3$ and $LOB_3\between LO_{2\leftarrow 3}$. By Proposition~\ref{intROac}, these doppelsemigroups are strong. Since $\Aut(LOB_3)\cong C_1$, $\Aut(LOB_3\between LO_{2\leftarrow 3})\cong C_1$.

\smallskip

{\noindent \bf Case $LO_{2\leftarrow 3}$}. Finally, consider the
last three-element semigroup $LO_{2\leftarrow 3}$ isomorphic to
the semigroup $\{a,b,c\}$ with operation $\dashv$ defined as
follows:
$$x\dashv y=\begin{cases}
x,\ x\neq c \\
a,\ x=c.
\end{cases}$$

Since this semigroup is the last semigroup, 
the previous cases imply that it has the following interassociates:
$LO_2^{+1}$, $LOB_3$, and interassociates that isomorphic to $(\{a,b,c\},\dashv)$.
Consider   interassociates of $(\{a,b,c\},\dashv)$ that isomorphic to
$(\{a,b,c\},\dashv)$. Since $a$ and $b$ are left zeros of
$(\{a,b,c\},\dashv)$, they must be left zeros of each
interassociate of $(\{a,b,c\},\dashv)$. It is clear that there
exists only one different from $(\{a,b,c\},\dashv)$ its
interassociate $(\{a,b,c\},\vdash)\cong(\{a,b,c\},\dashv)$, where
$$x\vdash y=\begin{cases}
x,\ x\neq c \\
b,\ x=c.
\end{cases}$$

It is easy to check that the map $\psi : \{a,b, c\}\to\{a,b, c\}$,
$\psi(a)=b$, $\psi(b)=a$ and $\psi(c)=c$, is a doppelsemigroup
isomorphism from $(\{a, b, c\}, \dashv,\dashv)$ to $(\{a,b, c\},
\dashv,\vdash)$. Consequently,  $LO_{2\leftarrow 3}$, $LO_{2\leftarrow 3}\between LO_2^{+1}$ and $LO_{2\leftarrow 3}\between LOB_3$ are the last three doppelsemigroups of order $3$.

It follows that $\Aut(LO_{2\leftarrow 3}\between LO_2^{+1})\cong \Aut(LO_2^{+1}\between LO_{2\leftarrow 3})\cong C_2$ and 
$\Aut(LO_{2\leftarrow 3}\between LOB_3)\cong \Aut(LOB_3\between LO_{2\leftarrow 3} )\cong C_1$.
Since $LOB_3\between LO_{2\leftarrow 3}$ is strong, $LO_{2\leftarrow 3}\between LOB_3$ is strong as well.
By analogy, $LO_{2\leftarrow 3}\between LO_2^{+1}$ is not strong.

\bigskip

We summarize the obtained results on the pairwise non-isomorphic non-trivial three-element (strong) doppelsemigroups and their automorphism groups in the following Tables~\ref{tab:autntcs3}, \ref{tab:autnts3} and \ref{tab:autntns3}.

\begin{table}[ht]
    \centering
    \resizebox{13cm}{!}{
        \begin{tabular}{|c|c|c|c|c|c|c|}
             \hline
            $D$ & $C_3\between C_3^{-1}$  &  $O_3\between \M_{3,1}$ & $O_3\between O_2^{+1}$ & $O_3\between O_2^{+0}$  & $O_3\between L_3$ & $O_3\between C_2^{+0}$  \\
              \hline
            $\Aut(D)$ & $C_1$  & $C_1$ & $C_{1}$ & $C_{1}$ & $C_{1}$ & $C_1$ \\
            \hline
            \hline
            $D$   & $O_3\between O_3^2$  & $O_3\between O_3^1$ &   $\M_{2,2}\between C_2^{+1}$  &  $\M_{2,2}\between C_2^{\tilde{1}}$ & $C_2^{+1}\between C_2^{\tilde{1}}$ & $C_2^{+1}\between \M_{2,2}$ \\
            \hline
            $\Aut(D)$  & $C_2$ & $C_1$ & $C_{1}$ & $C_{1}$ & $C_{1}$ & $C_{1}$\\
            \hline
            \hline
            $D$ & $C_2^{\tilde{1}}\between \M_{2,2} $ & $C_2^{\tilde{1}}\between C_2^{+1} $ & $\M_{3,1}\between O_2^{+1}$ & $\M_{3,1}\between O_3$  & $O_2^{+1}\between \M_{3,1}$ & $O_2^{+1}\between O_3$  \\
\hline
 $\Aut(D)$ & $C_1$ & $C_1$ & $C_1$ & $C_{1}$ & $C_{1}$ & $C_{1}$\\
 \hline
\hline
 $D$ & $(O_2\between L_2)^{+0}$  &  $ O_2^{+0}\between O_3$ & $L_3 \between O_3$ & 
   $(L_2\between O_2)^{+0}$ & $(C_2\between C_2^{-1})^{+0}$ & $C_2^{+0}\between O_3$     \\
             \hline
            $\Aut(D)$ & $C_1$ & $C_1$ & $C_1$ & $C_{1}$ & $C_{1}$ & $C_{1}$\\
            \hline
            \hline
            $D$ & $O_3^2\between O_3^1$ & $O_3^2\between O_3$  & $O_3^a\between O_3^b$ & $O_3^1\between O_3^2$ &  $O_3^1\between O_3$   \\
             \cline{1-6}
            $\Aut(D)$ & $C_1$ & $C_2$ & $C_1$ & $C_{1}$ & $C_{1}$  \\
            \cline{1-6}
        \end{tabular}

    }
    \smallskip
    \caption{  Three-element (strong) non-trivial commutative doppelsemigroups and their automorphism groups}\label{tab:autntcs3}
\end{table}

\begin{table}[ht]
    \centering
    \resizebox{16cm}{!}{
        \begin{tabular}{|c|c|c|c|c|c|c|}
            \hline
            $D$ & $O_3\between LO_2^{+0}$  & $O_3\between LO^{\sim 0}_{1\leftarrow2}$  & $LO_2^{+0}\between O_3$ &  $ LO^{\sim 0}_{1\leftarrow2}\between O_3$ & $LOB_3\between LO_{2\leftarrow 3}$  & $LO_{2\leftarrow 3}\between LOB_3$ \\
            & $O_3\between RO_2^{+0}$  & $O_3\between RO^{\sim 0}_{1\leftarrow2}$  & $RO_2^{+0}\between O_3$ & $ RO^{\sim 0}_{1\leftarrow2}\between O_3$ & $ROB_3\between RO_{2\leftarrow 3}$ & $RO_{2\leftarrow 3}\between ROB_3$    \\
             \hline
            $\Aut(D)$ & $C_2$ & $C_1$  & $C_{2}$ & $C_{1}$ & $C_{1}$ & $C_{1}$  \\
            \hline
              \end{tabular}
    }
    \smallskip
    \caption{Three-element non-trivial non-commutative strong doppelsemigroups and their automorphism groups}\label{tab:autnts3}
\end{table}

\begin{table}[ht]
    \centering
    \resizebox{16cm}{!}{
        \begin{tabular}{|c|c|c|c|c|c|}
           
             \hline
            $D$ & $LO_2^{+0}\between LO^{\sim 0}_{1\leftarrow 2}$ & $ LO^{\sim 0}_{1\leftarrow 2}\between LO_2^{+0}$ & $LO^{\sim 0}_{a\leftarrow 2}\between LO^{\sim 0}_{b\leftarrow 2}$ & $LO_2^{+1}\between LO_{2\leftarrow 3}$  &  $LO_{2\leftarrow 3}\between LO_2^{+1}$   \\
& $RO_2^{+0}\between RO^{\sim 0}_{1\leftarrow 2}$  & $ RO^{\sim 0}_{1\leftarrow 2}\between RO_2^{+0}$  & $RO^{\sim 0}_{a\leftarrow 2}\between RO^{\sim 0}_{b\leftarrow 2}$ & $RO_2^{+1}\between RO_{2\leftarrow 3}$  &  $RO_{2\leftarrow 3}\between RO_2^{+1}$    \\
            \hline
            $\Aut(D)$  & $C_1$ & $C_1$ & $C_1$ & $C_2$ & $C_{2}$   \\
           \hline
        \end{tabular}
    }
    \smallskip
    \caption{Three-element (non-commutative) non-strong doppelsemigroups and their automorphism groups}\label{tab:autntns3}
\end{table}

\newpage

It follows that we have proved the following theorem.

\begin{theorem}
There exist $75$ pairwise non-isomorphic three-element
doppelsemigroups among which $41$ doppelsemigroups are
commutative. Non-commutative doppelsemigroups are divided into
$17$ pairs of dual doppelsemigroups. Also up to isomorphism there
are $65$ strong doppelsemigroups of order $3$, and all non-strong
doppelsemigroups are not commutative.    There exist exactly $24$
pairwise non-isomorphic three-element trivial doppelsemigroups.
\end{theorem}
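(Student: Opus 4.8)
The plan is to reduce the global count to the case analysis carried out above by organizing all three-element doppelsemigroups according to the isomorphism class of their \emph{left} operation. Every doppelsemigroup $(D,\dashv,\vdash)$ has $(D,\dashv)$ isomorphic to exactly one of the $24$ three-element semigroups, and since $(D,\dashv,\vdash)$ and $(D,\vdash,\dashv)$ are regarded as (generally) distinct doppelsemigroups, each doppelsemigroup is accounted for in exactly one case; interassociativity being symmetric therefore causes no double counting. Thus the total number of three-element doppelsemigroups is the sum, over the $24$ semigroups $S$, of the number of pairwise non-isomorphic $(D,\dashv,\vdash)$ with $(D,\dashv)\cong S$, which is precisely what each Case above computes.

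To execute a single case for a semigroup $S$, I would first list all interassociates of $S$ up to isomorphism using the structural results of Section~3: Propositions~\ref{intnull+0} and~\ref{intnullA} for the inflations $O_X^{+0}$ and $O_X^A$, Proposition~\ref{intmonoid+almost1} for $(M,\dashv)^{\tilde{1}}$ and the monoid cases (where all interassociates are variants), Propositions~\ref{intLO+0} and~\ref{intRO+0} for $LO_X^{+0}$ and $RO_X^{+0}$, and Proposition~\ref{intROac} for $LOB_X$ and $ROB_X$; rectangular bands and groups are handled by the cited facts that they coincide with, respectively are isomorphic to, all their interassociates. For each interassociate class $T$ I would then count the doppelsemigroups with $(D,\dashv)\cong S$ and $(D,\vdash)\cong T$: Proposition~\ref{uniquedop} (together with Propositions~\ref{nulldop} and~\ref{autnulldop} in the null-semigroup cases) shows this number is $1$ whenever the relevant uniqueness hypothesis holds, while in the remaining cases one exhibits an explicit relabelling $\psi$ and checks whether it is a doppelsemigroup isomorphism, thereby deciding whether two pairings with the same pair of underlying classes coincide. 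Summing the per-case numbers of commutative doppelsemigroups $2,8,3,3,3,3,3,3,3,3,3,4$ over the twelve commutative semigroups (only the eight commutative interassociates of $O_3$ contributing commutative doppelsemigroups) gives the $41$ commutative doppelsemigroups. For the non-commutative semigroups I would invoke duality: since $(D,\dashv,\vdash)^d$ is a doppelsemigroup iff $(D,\dashv,\vdash)$ is, with $\Aut((D,\dashv,\vdash)^d)=\Aut(D,\dashv,\vdash)$, it suffices to treat the six left-handed representatives, whose counts $1,3,4,2,2,3$ sum to $15$; doubling and adjoining the four non-commutative doppelsemigroups produced in Case $O_3$ yields $34$ non-commutative doppelsemigroups, falling into $17$ dual pairs. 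Hence $41+34=75$ in total, and the $24$ trivial doppelsemigroups $(S,\dashv,\dashv)$ are the diagonal entries.

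Finally, I would read off the strong/non-strong dichotomy. The interassociate propositions already record which interassociates are strong (all of them for $O_X^{+0}$, $O_X^A$, $LOB_X$, and for the commutative monoids by Proposition~\ref{intmonoid+almost1}), and the only non-strong examples arise in Cases $LO_2^{+0}$, $LO^{\sim0}_{1\leftarrow2}$, $LO_2^{+1}$, $LO_{2\leftarrow3}$ and their duals, namely the five left-handed doppelsemigroups $LO_2^{+0}\between LO^{\sim0}_{1\leftarrow2}$, $LO^{\sim0}_{1\leftarrow2}\between LO_2^{+0}$, $LO^{\sim0}_{a\leftarrow2}\between LO^{\sim0}_{b\leftarrow2}$, $LO_2^{+1}\between LO_{2\leftarrow3}$, $LO_{2\leftarrow3}\between LO_2^{+1}$ together with their duals, for a total of ten non-strong doppelsemigroups, all non-commutative. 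This leaves $75-10=65$ strong doppelsemigroups and confirms that every non-strong one is non-commutative.

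The main obstacle is not the arithmetic but guaranteeing both \emph{exhaustiveness} of the interassociate lists and \emph{correctness} of the coincidence bookkeeping. Proposition~\ref{uniquedop} collapses most cases to counting ordered pairs of interassociate classes, but its hypothesis fails exactly when $S$ has an interassociate isomorphic to $S$ via a nontrivial relabelling; there a single pair of underlying classes can support two non-isomorphic doppelsemigroups (as with $O_3^a\between O_3^b$ and $LO^{\sim0}_{a\leftarrow2}\between LO^{\sim0}_{b\leftarrow2}$), or, conversely, a seemingly new pairing may collapse onto the trivial one. Distinguishing these outcomes requires computing $\Aut(S)$ and deciding whether the candidate swap $\psi$ is simultaneously an isomorphism for both operations; getting every such fixed-point and invariant argument right across the twelve non-commutative representatives, and confirming via Propositions~\ref{intnull+0}--\ref{intROac} that no interassociate has been overlooked, is where the genuine work lies.
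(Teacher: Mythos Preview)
Your proposal is correct and follows precisely the paper's own approach: the theorem is deduced from the exhaustive case analysis of Section~\ref{3eldsg}, organized by the isomorphism type of the left operation $(D,\dashv)$, with the interassociate lists supplied by Propositions~\ref{intnull+0}--\ref{intROac}, the coincidence bookkeeping handled via Propositions~\ref{nulldop} and~\ref{uniquedop}, and the non-commutative side reduced to the six left-handed representatives by duality. Your numerical tallies (the $2,8,3,\dots,4$ for the commutative cases, the $1,3,4,2,2,3$ for the left-handed non-commutative cases, the four non-commutative contributions from Case~$O_3$, and the ten non-strong doppelsemigroups) match the paper's case-by-case results exactly.

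One small inaccuracy in your final paragraph: the hypothesis of Proposition~\ref{uniquedop} fails not only when $S$ has an interassociate isomorphic to $S$ itself, but more generally whenever two distinct interassociates of $S$ are isomorphic to each other (for instance in Case~$O_3^2$, where the two nontrivial interassociates are both isomorphic to $O_3^1$, not to $O_3^2$). This does not affect your counts, since in each such instance you (and the paper) resolve the ambiguity by exhibiting an explicit bijection and checking whether it is a doppelsemigroup isomorphism.
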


\section{Acknowledgment}

The authors would like to express their sincere thanks to  the anonymous referee
for a very careful reading of the paper and for all its insightful comments and valuable suggestions, which improve considerably the presentation of this paper.

\end{document}